\renewcommand{\(}{\left\(}
\renewcommand{\)}{\right\)}
\renewcommand{\[}{\left\[}
\renewcommand{\]}{\right\]}
\renewcommand{\i}{\infty}
\numberwithin{equation}{section}
 \theoremstyle{plain}
\newtheorem{theorem}{Theorem}[section]
\newtheorem{lemma}[theorem]{Lemma}
\newtheorem{remark}[]{Remark}
\newtheorem{definition}[]{Definition}
\newtheorem{corollary}[theorem]{Corollary}
\newtheorem{proposition}[theorem]{Proposition}
\def\proof{\@ifnextchar[{\@oproof}{\@nproof}}
\def\@oproof[#1][#2]{\trivlist\item[\hskip\labelsep\textit{#2 Proof of\
#1.}~]\ignorespaces}
\def\@nproof{\trivlist\item[\hskip\labelsep\textit{Proof.}~]\ignorespaces}
\begin{document}
\title[Generalization of five $q$-series identities of Ramanujan]{Generalization of five $q$-series identities of Ramanujan and unexplored weighted partition identities}

\thanks{$2020$ \textit{Mathematics Subject Classification.} Primary 11P81, 11P84; Secondary 05A17.\\
\textit{Keywords and phrases.} $q$-Series, Divisor functions, Bressoud-Subbarao's identity,
Weighted partition identities}

\author{Subhash Chand Bhoria}
\address{Subhash Chand Bhoria, Pt. Chiranji Lal Sharma Government PG College, Karnal, Urban Estate, Sector-14, Haryana 132001, India.}
\email{scbhoria89@gmail.com}

\author{Pramod Eyyunni}
\address{Pramod Eyyunni, Indian Institute of Science Education and Research Berhampur, Industrial Training Institute (ITI)
 Berhampur, Engineering School Road, Berhampur, Odisha - 760010, India.} 
\email{pramodeyy@gmail.com}

\author{Bibekananda Maji}
\address{Bibekananda Maji, Discipline of Mathematics, Indian Institute of Technology Indore, Simrol, Indore 453552, Madhya Pradesh, India.}
\email{bibek10iitb@gmail.com}

\begin{abstract}
Ramanujan recorded five interesting $q$-series identities in a section that is not as systematically arranged as the other chapters of his 
second notebook. 
These five identities do not seem to have acquired enough attention.
Recently, Dixit and the third author found a one-variable generalization of one of the aforementioned five identities. 
From their generalized identity, they were able to derive the last three of these $q$-series identities, but didn't establish the first two. 
In the present article, we derive a one-variable generalization of the main identity of Dixit and the third author from which we 
successfully deduce all the five $q$-series identities of Ramanujan. In addition to this, 
we also establish a few interesting weighted partition identities from our generalized identity. In the mid $1980$'s, Bressoud and Subbarao found an interesting identity
connecting the generalized divisor function with a weighted partition function, which they proved by means of a purely combinatorial argument. 
Quite surprisingly, we found an analytic proof for a generalization of the identity of Bressoud and Subbarao, 
starting from the fourth identity of the aforementioned five $q$-series identities of Ramanujan. 

\end{abstract}
\maketitle
\section{Introduction}\label{intro} 
Many beautiful $q$-series identities can be found in Ramanujan's notebooks. At the end of the second notebook \cite[pp.~354--355]{ramanujanoriginalnotebook2}, \cite[pp. 302--303]{ramanujantifr}, 
Ramanujan mentioned a list of five $q$-series identities. Before stating these identities, 
we present some notations indispensable to the theory of $q$-series. If
$q$ is a complex number with $|q|<1$, then the $q$-Pochhammer symbol is defined as
\begin{align}
(A)_0 &:=(A;q)_0 =1, \quad (A)_n :=(A;q)_n  = (1-A)(1-Aq)\cdots(1-Aq^{n-1}),
\quad n \geq 1, \nonumber \\
(A)_{\infty} &:=(A;q)_{\i} := \lim_{n \rightarrow \i}(A;q)_n\nonumber.
\end{align}
We now state the list of five identities given by Ramanujan.

{\bf Entry 1:} For $a \neq 0, \ 1-bq^n\neq 0, \ n\geq 1$,
 \begin{equation}\label{entry1}
 \frac{(-aq)_{\infty}}{(bq)_{\infty}}=\sum_{n=0}^{\infty}\frac{(-b/a)_na^nq^{n(n+1)/2}}{(q)_n(bq)_n}.
 \end{equation}
It is also recorded in the Lost Notebook \cite[p.~370]{lnb}. 

{\bf Entry 2:} 
Let $1- a q^n \neq 0$ for $n \geq 1$. Then
 \begin{equation}\label{entry2}
 (aq)_{\infty}\sum_{n=1}^{\infty}\frac{na^nq^{n^2}}{(q)_n(aq)_n}=\sum_{n=1}^{\infty}\frac{(-1)^{n-1}a^nq^{n(n+1)/2}}{1-q^n}.
 \end{equation}

{\bf Entry 3:}
For $a \neq 0,$ and $1-bq^n \neq 0, \ n\geq 0$,
\begin{equation}\label{entry3}
\sum_{n=1}^{\infty} \frac{ (b/a)_n a^n }{ (1- q^n) (b)_n } = \sum_{n=1}^{\infty} \frac{a^n - b^n }{ 1- q^n }. 
\end{equation}
Letting $a \rightarrow 0$, and replacing $b$ by $a q$, one can obtain the next identity.

{\bf Entry 4:}
For $ 1- a q^n \neq 0, \ n \geq 1$,
\begin{equation}\label{entry4}
\sum_{n=1}^{\infty} \frac{(-1)^{n-1} a^n q^{\frac{n(n+1)}{2} } }{(1-q^n) (a q)_n  } = \sum_{n=1}^{\infty} \frac{a^n q^n }{1-q^n}.
\end{equation}
The last identity is the following:

{\bf Entry 5:}
For $ 1- a q^n \neq 0, \ n \geq 0$,
\begin{equation}\label{entry5}
\sum_{n=1}^\infty \frac{a^n (q)_{n-1}}{(1- q^n) (a)_n } = \sum_{n=1}^\infty \frac{n a^n}{1- q^n}.
\end{equation}
These five identities were first proved by Berndt, (see 
\cite[pp.~262--265]{bcbramforthnote} for more information). Infact, the ordering Entries $1$ 
through $5$ given above is due to Berndt.
As is the case with \eqref{entry4}, one can deduce \eqref{entry5} also from \eqref{entry3}. 
Equation \eqref{entry4} was rediscovered independently by Uchimura \cite[Equation (3)]{uchimura81}.
Around the same time as Ramanujan, the special case $a=1$ of \eqref{entry4} 
was obtained by Kluyver \cite{kluyver}:
\begin{equation}\label{Kluyver}
\sum_{n=1}^{\infty} \frac{(-1)^{n-1} q^{\frac{n(n+1)}{2}}}{(1-q^n) ( q)_n  } = \sum_{n=1}^{\infty} \frac{ q^n }{1-q^n}.
\end{equation}
Fine \cite[p.~14, Equations (12.4), (12.42)]{fine} rediscovered this identity, where as Uchimura \cite{uchimura81} found a new expression for the above identity. 
He proved that
\begin{equation}\label{Uchimura}
\sum_{n=1}^\infty n q^n (q^{n+1})_\infty =\sum_{n=1}^{\infty} \frac{(-1)^{n-1} q^{\frac{n(n+1)}{2} } }{(1-q^n) ( q)_n  } = \sum_{n=1}^{\infty} \frac{ q^n }{1-q^n}.
\end{equation}
Bressoud and Subbarao \cite{bresub} derived a beautiful partition theoretic interpretation of this identity. 
Before stating this interpretation, we discuss a few notations that will be useful throughout the paper.
\begin{itemize}
\item $\pi$: an integer partition,

\item $p(n)$: the number of integer partitions of $n$,

\item $ s(\pi):=$ the smallest part of $\pi$,

\item $\ell(\pi):=$ the largest part of $\pi$,

\item $\#(\pi):=$ the number of parts of $\pi$,

\item $\mathrm{rank}(\pi)= \ell(\pi)- \#(\pi)$,

  
\item $\nu_{d}(\pi):=$ the number of parts of $\pi$ without multiplicity,

\item $\nu(j):=$ the number of times the integer $j$ occurs in a partition,

\item $\overline{p}(n):=$ the number of overpartitions of $n$,

\item $\mathcal{P}(n):=$ collection of all integer partitions of $n$,

\item $\mathcal{D}(n):=$ collection of all partitions of $n$ into distinct parts,

\item $\mathcal{P}_{o}(n):=$ collection of all overpartitions of $n$,

\item $\mathcal{P}^{*}(n):=$ partitions into consecutive integers with smallest part $1$.
\end{itemize}

Let $d(n)$ be the number of positive divisors of $n$, then the identity of Bressoud and Subbarao reads
\begin{equation}\label{BS}
\sum_{ \pi \in \mathcal{D}(n)  } (-1)^{ \# (\pi)-1 } s(\pi)=d(n).
\end{equation}
In the same paper \cite{bresub}, using combinatorial arguments, they also derived a more general identity involving the generalized divisor function, 
 \begin{equation}\label{BS-general}
\sum_{ \pi \in \mathcal{D}(n)  } (-1)^{ \# (\pi)-1 } \sum_{j=1}^{s(\pi)} 
( \ell(\pi) - s(\pi) + j ) ^m = \sum_{d | n} d^m,
\end{equation}
for any integer $m \geq 0$. But they didn't mention a generating function identity corresponding to \eqref{BS-general}. 
The identity \eqref{BS} was rediscovered by Fokking, Fokking and Wang \cite{ffw95} and the importance of this identity is that 
 it connects the divisor function and the partition function. 

In 2013, Andrews, Garvan and Liang \cite[Theorem 3.5]{agl13} established a one variable generalization of \eqref{Kluyver}, namely, for $|cq| <1$,  
\begin{equation}\label{ffwz}
\sum_{n=1}^{\infty} \mathrm{FFW}(c,n) q^n = \sum_{n=1}^{\infty} \frac{(-1)^{n-1} q^{\frac{n(n+1)}{2}}}{(1-cq^n) (q)_n } = \frac{1}{1-c} \left(1- \frac{(q)_{\infty} }{(c q)_\infty }  \right), 
\end{equation}
where
\begin{equation*}
\textup{FFW}(c,n):=\sum_{\pi\in\mathcal{D}(n)}(-1)^{\#(\pi)-1}\left(1+c+\cdots+c^{s(\pi)-1}\right).
\end{equation*}

Recently, Dixit and Maji \cite{DM} obtained a one variable generalization of \eqref{entry3} and from this generalized identity,
they were able to connect many well-known $q$-series identities in the literature. Not only that, they also gave partition theoretic 
interpretations of many of the $q$-series identities they found. But they were unable to relate \eqref{entry1} and \eqref{entry2} 
to their generalized identity. Here we state the main identity of Dixit and Maji.
\begin{theorem}\label{gen of Ramanujan's identity}
 Let $a, b, c$ be three complex numbers such that $|a|<1$ and $|cq| < 1$. Then
\begin{equation}\label{entry3gen}
\sum_{n =1}^{\infty} \frac{ (b/a)_n a^n }{ (1- c q^n) (b)_n } =  \sum_{m=0}^{\infty}\frac{(b/c)_mc^m}{(b)_m}\left(\frac{aq^m}{1-aq^m}-\frac{bq^m}{1-bq^m}\right).
\end{equation}
In addition, if $|b|< \min \{|c|, 1\}$,
\begin{equation}\label{entry3gen00}
\sum_{n =1}^{\infty}  \frac{ (b/a)_n a^n }{ (1- c q^n) (b)_n }=\frac{ ( b/c )_{\infty }}{(b)_{\infty} }   \sum_{n=0}^{\infty} \frac{(c)_n ( b/c)^n }{(q)_n}    \sum_{m=1}^{\infty} \frac{a^m - b^m }{1- c q^{m+n}  }.
\end{equation}
\end{theorem}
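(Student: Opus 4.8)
The plan is to establish \eqref{entry3gen} first, by the method of $q$-difference equations in the variable $c$, and then to deduce \eqref{entry3gen00} from it by the $q$-binomial theorem. Throughout I fix $a,b$ with $|a|<1$ and regard both sides of \eqref{entry3gen} as functions of $c$ on the disc $|cq|<1$. On this disc both series converge uniformly on compacta (the general term is $O(|a|^n)$ on the left and $O(|cq|^m)$ on the right, since $|cq^n|\le|cq|<1$ for $n\ge1$), so both define functions analytic in $c$, in particular at $c=0$. Write $L(c)$ and $R(c)$ for the two sides.

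The heart of the argument is that $L$ and $R$ both satisfy the first-order $q$-difference equation
\begin{equation*}
f(c)=\frac{(a-b)\,cq}{(1-cq)(cq-b)}+\frac{acq-b}{cq-b}\,f(cq).
\end{equation*}
For $f=L$: set $P_n=\tfrac{(b/a)_na^n}{(b)_n}$, so that $P_{n+1}=P_n+\tfrac{a-1}{1-bq^n}P_n$. Shifting the summation index in $L(c)=\sum_{n\ge1}P_n/(1-cq^n)$ by one, substituting this recurrence, and using the partial fraction identity $\frac1{(1-cq^{m+1})(1-bq^m)}=\frac1{cq-b}\bigl(\frac{cq}{1-cq^{m+1}}-\frac b{1-bq^m}\bigr)$ expresses $L(c)$ through $L(cq)$ and $L(b)$. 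The value $L(b)$ is summable in closed form: since $(b)_n(1-bq^n)=(b)_{n+1}=(1-b)(bq)_n$ one has $L(b)=\frac1{1-b}\sum_{n\ge1}\frac{(b/a)_na^n}{(bq)_n}$, and the $q$-Gauss sum $\sum_{n\ge0}\frac{(b/a)_na^n}{(bq)_n}=\frac{(aq)_\infty(b)_\infty}{(bq)_\infty(a)_\infty}=\frac{1-b}{1-a}$ (whose convergence condition is exactly $|a|<1$) gives $L(b)=\frac{a-b}{(1-a)(1-b)}$; substituting back and simplifying gives the displayed equation for $L$.

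For $f=R$: write $R(c)=\sum_{m\ge0}g_m(c)\,w_m$ with $g_m(c)=\frac{(b/c)_mc^m}{(b)_m}=\prod_{i=0}^{m-1}\frac{c-bq^i}{1-bq^i}$ and $w_m=\frac{aq^m}{1-aq^m}-\frac{bq^m}{1-bq^m}=\frac{(a-b)q^m}{(1-aq^m)(1-bq^m)}$. A direct computation gives $g_m(cq)=\frac{q^{m-1}(cq-b)}{c-bq^{m-1}}\,g_m(c)$ for $m\ge1$; inserting this into $R(c)-\frac{acq-b}{cq-b}R(cq)$ and using $w_m(1-aq^m)=\frac{(a-b)q^m}{1-bq^m}$, everything collapses to $\frac{(a-b)cq}{(1-b)(cq-b)}+\frac{(a-b)cq}{cq-b}\sum_{m\ge1}\frac{g_m(cq)}{1-bq^m}$. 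The remaining sum telescopes: from $g_m(cq)-g_{m-1}(cq)=\frac{cq-1}{1-bq^{m-1}}g_{m-1}(cq)$ and $g_m(cq)\to0$ as $m\to\infty$ (its $i$-th factor tends to $cq$, and $|cq|<1$) one gets $\sum_{m\ge1}\frac{g_m(cq)}{1-bq^m}=\frac{cq-b}{(1-cq)(1-b)}$, which reduces the previous expression to exactly $\frac{(a-b)cq}{(1-cq)(cq-b)}$. Moreover $L(b)=R(b)$: indeed $L(b)=\frac{a-b}{(1-a)(1-b)}$ from above, while $R(b)=w_0=\frac a{1-a}-\frac b{1-b}$ is the same, the terms $m\ge1$ dropping out since $(b/b)_m=(1)_m=0$. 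Hence $h:=L-R$ satisfies the homogeneous equation $h(c)=\frac{acq-b}{cq-b}h(cq)$, so $h(c)=\bigl(\prod_{i=1}^{J}\frac{acq^i-b}{cq^i-b}\bigr)h(cq^J)$; letting $J\to\infty$, the infinite product converges (its factors are $1+O(q^i)$) to a nonzero value while $h(cq^J)\to h(0)$, so $h(b)=0$ forces $h(0)=0$ and therefore $h\equiv0$. (The generic conditions $b\notin\{cq^i:i\ge1\}$ and $aq^i\ne1$ used along the way are removed by continuity.) I expect the main obstacle to be the bookkeeping for $f=R$—the relation between $g_m(cq)$ and $g_m(c)$ and the ensuing telescoping—together with the routine but not entirely trivial justification of the rearrangements involved.

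Finally, to pass from \eqref{entry3gen} to \eqref{entry3gen00}, assume in addition $|b|<\min\{|c|,1\}$. In the right-hand side of \eqref{entry3gen} write $\frac{(b/c)_m}{(b)_m}=\frac{(b/c)_\infty}{(b)_\infty}\cdot\frac{(bq^m)_\infty}{((b/c)q^m)_\infty}$ and expand the quotient by the $q$-binomial theorem, $\frac{(c\cdot(b/c)q^m)_\infty}{((b/c)q^m)_\infty}=\sum_{n\ge0}\frac{(c)_n}{(q)_n}\bigl((b/c)q^m\bigr)^n$, which is legitimate since $|(b/c)q^m|\le|b/c|<1$ for all $m\ge0$. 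Interchanging the absolutely convergent sums over $m$ and $n$ and then resumming the geometric series in $m$ via $\sum_{m\ge0}c^mq^{mn}\bigl(\frac{aq^m}{1-aq^m}-\frac{bq^m}{1-bq^m}\bigr)=\sum_{l\ge1}(a^l-b^l)\sum_{m\ge0}(cq^{n+l})^m=\sum_{l\ge1}\frac{a^l-b^l}{1-cq^{n+l}}$—where $|b|<1$ legitimizes expanding $\frac{bq^m}{1-bq^m}$ and $|cq|<1$ the resummation—yields precisely \eqref{entry3gen00}.
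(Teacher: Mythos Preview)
Your argument is correct, and it proceeds along a genuinely different route from the paper. This paper does not give a direct proof of the stated theorem at all; it is quoted from \cite{DM}. What the paper does prove is the four-parameter generalization Theorem~\ref{Generalization of Dixit-Maji}, by specializing the ${}_3\phi_2$ transformation \cite[p.~359, (III.9)]{GasperRahman}, and then observes that \eqref{entry3gen} drops out upon setting $d=1$. So the paper's route is: invoke a known ${}_3\phi_2$ transformation, rewrite, specialize. Your route is: identify a first-order $q$-difference equation in $c$ satisfied by both sides, evaluate both sides at $c=b$ via the $q$-Gauss sum, and conclude by an iteration/analyticity argument. Your computations for $L$ and $R$ check out (the telescoping for $R$ is the only delicate step, and it is fine), and your passage from \eqref{entry3gen} to \eqref{entry3gen00} via the $q$-binomial expansion of $(bq^m)_\infty/((b/c)q^m)_\infty$ is exactly what one expects, with the interchange of sums properly justified under $|b|<\min\{|c|,1\}$.

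Two brief remarks on the trade-offs. The paper's approach, relying on a stock ${}_3\phi_2$ identity, is shorter and immediately gives the stronger four-parameter identity \eqref{CEM-main identity}; your $q$-difference method, as written, is tailored to the three-parameter case and would require a separate (and messier) difference equation to reach Theorem~\ref{Generalization of Dixit-Maji}. On the other hand, your argument is more self-contained: apart from the $q$-Gauss sum (used only to evaluate $L(b)$) and the $q$-binomial theorem, it uses nothing but partial fractions and telescoping. One small point you should make explicit: plugging $c=b$ into the relation $h(c)=\Pi(c)h(0)$ requires $b$ to lie in the disc $|cq|<1$, i.e.\ $|bq|<1$. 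This is harmless---prove the identity first for such $b$ and then extend by analytic continuation in $b$---but it deserves a sentence alongside your ``generic conditions \dots removed by continuity'' remark.
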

In the present article, one of our main goals is to derive a one variable generalization of Theorem \ref{gen of Ramanujan's identity}. 
Putting $c = 1$ in \eqref{entry3gen00}, one can easily derive \eqref{entry3}. Again, letting $a \rightarrow 0$ and replacing $b$ by $zq$, 
Dixit and Maji obtained the following one-variable generalization of Andrews, Garvan and Liang's \eqref{ffwz} result, namely, 
for $|cq|<1$,
\begin{equation}\label{gen of Garvan}
\sum_{n=1}^{\infty} \frac{(-1)^{n-1} z^n q^{\frac{n(n+1)}{2} } }{(1-c q^n) (z q)_n  } = \frac{z}{c}\sum_{n=1}^{\infty}\frac{(zq/c)_{n-1}}{(zq)_n}(cq)^n.
\end{equation}
This identity turned out to be a rich source of weighted partition identities. (see \cite[p. 326]{DM}) 
In the next section, we state our one-variable generalization of Theorem \ref{gen of Ramanujan's identity} and its implications, 
which includes a two variable generalization of the result of Andrews, Garvan and Liang
and a beautiful identity of Andrews.
\section{Main Results}
We proceed to state a one-variable generalization of Theorem \ref{gen of Ramanujan's identity}.
\begin{theorem}\label{Generalization of Dixit-Maji}
Let $a, b, c, d$  be four complex numbers such that $|ad|<1$ and $|cq|<1$. Then
\begin{equation}\label{CEM-main identity}
\sum_{n = 1}^{\infty}  \frac{ (b/a)_n (c/d)_n (ad)^n }{ (b)_n (cq)_n } = 
\frac{(a-b)(d-c)}{(ad-b)}\sum_{m=0}^{\infty}\frac{(a)_m (bd/c)_m c^m}{(b)_m (ad)_m}
\left(\frac{adq^m}{1-adq^m}-\frac{bq^m}{1-bq^m}\right).
\end{equation}
\end{theorem}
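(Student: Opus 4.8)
The plan is to recast the left side of \eqref{CEM-main identity} as a single ${}_3\phi_2$ series of Sears' type, apply Sears' transformation, and then reconcile the outcome with the right side through an elementary telescoping; throughout I write $(x)_n=(x;q)_n$. First I would observe that, after inserting a redundant numerator parameter $q$ to supply the missing factor $(q)_n$, the left side of \eqref{CEM-main identity} together with its formal $n=0$ term equals
\begin{equation*}
\sum_{n\ge0}\frac{(b/a)_n(c/d)_n(q)_n}{(q)_n(b)_n(cq)_n}(ad)^n={}_3\phi_2\bigl(b/a,c/d,q;\,b,cq;\,q,ad\bigr),
\end{equation*}
which converges since $|ad|<1$. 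The crucial structural point is that the argument $ad$ is exactly the Sears value for these parameters: with Sears' parameters $(\alpha,\beta,\gamma,\delta,\varepsilon)=(q,b/a,c/d,b,cq)$ one has $\delta\varepsilon/(\alpha\beta\gamma)=\frac{b\cdot cq}{q\cdot(b/a)\cdot(c/d)}=ad$. Applying Sears' ${}_3\phi_2$ transformation while keeping the parameter $\alpha=q$ fixed turns this series into
\begin{equation*}
\frac{(c;q)_\infty(adq;q)_\infty}{(cq;q)_\infty(ad;q)_\infty}\,{}_3\phi_2\bigl(q,a,bd/c;\,b,adq;\,q,c\bigr),
\end{equation*}
where the prefactor collapses to $(1-c)/(1-ad)$; combining this with $1/(adq)_m=(1-ad)/\bigl[(ad)_m(1-adq^m)\bigr]$ shows that, for $|c|<1$, the left side of \eqref{CEM-main identity} increased by $1$ equals $(1-c)\,U$, where
\begin{equation*}
U:=\sum_{m\ge0}\frac{(a)_m(bd/c)_m\,c^m}{(b)_m(ad)_m(1-adq^m)}.
\end{equation*}

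Next I would treat the right side of \eqref{CEM-main identity}. Writing $\frac{adq^m}{1-adq^m}=\frac{1}{1-adq^m}-1$ and $\frac{bq^m}{1-bq^m}=\frac{1}{1-bq^m}-1$, the right side becomes $\frac{(a-b)(d-c)}{ad-b}\,(U-V)$, where $V:=\sum_{m\ge0}\frac{(a)_m(bd/c)_m c^m}{(b)_m(ad)_m(1-bq^m)}$. Hence the theorem is reduced to the single algebraic identity $(1-c)U-1=\frac{(a-b)(d-c)}{ad-b}(U-V)$, which, after clearing the factor $ad-b$ and invoking the elementary relation $(a-b)(d-c)+(1-d)(ac-b)=(ad-b)(1-c)$, is equivalent to
\begin{equation*}
(1-d)(ac-b)\,U+(a-b)(d-c)\,V=ad-b.
\end{equation*}

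This last identity I would establish by telescoping. Put $W_m:=(ad-b)\dfrac{(a)_m(bd/c)_m\,c^m}{(b)_m(ad)_m}$, so that $W_0=ad-b$ and $W_m\to0$ as $m\to\infty$ (since $|c|<1$). Computing $W_m-W_{m+1}$ and simplifying by means of the polynomial identity in $x$
\begin{equation*}
(ad-b)\Bigl[(1-bx)(1-adx)-c(1-ax)\bigl(1-\tfrac{bd}{c}x\bigr)\Bigr]=(1-d)(ac-b)(1-bx)+(a-b)(d-c)(1-adx)
\end{equation*}
(both sides are linear in $x$, so it suffices to compare the constant and linear coefficients) shows that $W_m-W_{m+1}$ is precisely the $m$-th summand of the series $(1-d)(ac-b)U+(a-b)(d-c)V$; summing over $m\ge0$ therefore gives $W_0=ad-b$, as required. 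This proves \eqref{CEM-main identity} for $|c|<1$, and the stated range $|cq|<1$ follows by analytic continuation in $c$, both sides being holomorphic in $c$ there.

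The part I expect to be the main obstacle is twofold: first, recognizing that the somewhat atypical left-hand series is of Sears' type, so that the transformation applies verbatim; and second, after Sears leaves behind the stray constant $1$ contributed by the $n=0$ term, guessing the correct closed form $W_m$ for which the telescoping collapses exactly to $ad-b$. Once these two ingredients are in place, the remaining work — the prefactor simplification, the two polynomial identities, and the convergence and continuation bookkeeping — is routine.
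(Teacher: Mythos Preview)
Your proof is correct and rests on the same key input as the paper's: the Sears ${}_3\phi_2$ transformation (this is exactly Gasper--Rahman (III.9), which the paper invokes). The difference lies only in how the transformation is parameterized and in the post-processing.

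The paper applies Sears with the shifted parameters $(A,B,C,D,E)=(q,\,bq/a,\,cq/d,\,bq,\,cq^2)$ rather than your $(q,\,b/a,\,c/d,\,b,\,cq)$. This seemingly minor shift buys two simplifications. First, the transformed argument becomes $E/A=cq$ instead of $c$, so the result is valid directly on the stated region $|cq|<1$ and no analytic continuation is needed. Second, the original ${}_3\phi_2$ with these shifted parameters is, after a one-step index shift and multiplication by the constant $\frac{(1-b/a)(1-c/d)ad}{(1-b)(1-cq)}$, already the left side of \eqref{CEM-main identity} starting at $n=1$; there is no stray additive $1$ to absorb. The right side then falls out after a single partial-fraction identity,
\[
\frac{(ad-b)\,q^m}{(1-adq^m)(1-bq^m)}=\frac{adq^m}{1-adq^m}-\frac{bq^m}{1-bq^m},
\]
together with $(b)_{m+1}=(b)_m(1-bq^m)$ and $(ad)_{m+1}=(ad)_m(1-adq^m)$. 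In particular, the telescoping identity $(1-d)(ac-b)\,U+(a-b)(d-c)\,V=ad-b$ that you single out as the main obstacle is not needed at all in the paper's route; it is an artifact of having kept the $n=0$ term and of working with the argument $c$ rather than $cq$. Your approach is a legitimate and self-contained alternative, but the paper's parameter choice makes the argument a couple of lines of algebra rather than an additional telescoping lemma plus continuation.
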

Substituting $d=1$ in \eqref{CEM-main identity}, one can immediately obtain \eqref{entry3gen}. 
Letting ${a\to 0}$ and $b=zq$ in \eqref{Generalization of Dixit-Maji}, we get a two-variable generalization of the result of 
Andrews, Garvan and Liang, namely, \eqref{ffwz}.
\begin{theorem}\label{form 2 of the generalization}
For $|cq|<1$, we have
\begin{equation}\label{2-var_gen_AGL}
\sum_{n=1}^{\infty}\frac{(-z)^n(c/d)_nd^nq^{n(n+1)/2}}{(zq)_n(cq)_n}=\frac{z(c-d)}{c}\sum_{n=1}^{\infty}\frac{(zdq/c)_{n-1}(cq)^n}{(zq)_n}.
\end{equation}
\end{theorem}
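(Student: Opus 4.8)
The plan is to derive \eqref{2-var_gen_AGL} as a limiting case of Theorem \ref{Generalization of Dixit-Maji}: put $b=zq$ in \eqref{CEM-main identity} and then let $a\to 0$, checking that both sides converge to the corresponding sides of \eqref{2-var_gen_AGL}. On the left-hand side the only factor misbehaving as $a\to0$ is $(b/a)_n$, but it is tamed by the accompanying power of $a$: writing $(b/a)_n(ad)^n=d^n\prod_{j=0}^{n-1}(a-zq^{j+1})$ gives $\lim_{a\to0}(b/a)_n(ad)^n=d^n\prod_{j=0}^{n-1}(-zq^{j+1})=(-z)^nd^nq^{n(n+1)/2}$, so the left side of \eqref{CEM-main identity} with $b=zq$ tends termwise to $\sum_{n\ge1}(-z)^n(c/d)_nd^nq^{n(n+1)/2}/\big((zq)_n(cq)_n\big)$, which is the left side of \eqref{2-var_gen_AGL}.

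For the right-hand side, the prefactor becomes $\dfrac{(a-zq)(d-c)}{ad-zq}\to\dfrac{-zq(d-c)}{-zq}=d-c$ as $a\to0$. Inside the sum, $(a)_m\to1$, $(ad)_m\to1$, and $adq^m/(1-adq^m)\to0$, so the $m$-th summand tends to $-\dfrac{(zdq/c)_m\,c^m}{(zq)_m}\cdot\dfrac{zq^{m+1}}{1-zq^{m+1}}$. Using $(zq)_m(1-zq^{m+1})=(zq)_{m+1}$ and re-indexing by $n=m+1$, the sum collapses to $-\dfrac{z}{c}\sum_{n=1}^{\infty}\dfrac{(zdq/c)_{n-1}(cq)^n}{(zq)_n}$; multiplying by the limiting prefactor $d-c$ yields exactly the right side of \eqref{2-var_gen_AGL}, since $(d-c)(-z/c)=z(c-d)/c$.

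The one genuine point needing care — and the main obstacle — is justifying the interchange of $\lim_{a\to0}$ with the two infinite sums; everything else is routine algebra. I would handle this by dominated convergence. Fix $\delta>0$ small enough that $|ad|<1$ and the relevant $q$-products stay bounded away from zero for all $|a|\le\delta$. On the left, $|(b/a)_n(ad)^n|\le|d|^n\prod_{j=0}^{n-1}(\delta+|z||q|^{j+1})\le K_\delta\,(|d|\delta)^n$ with $K_\delta=\prod_{j\ge0}(1+|z||q|^{j+1}/\delta)<\infty$; since $(c/d)_n$ is bounded while $(zq)_n,(cq)_n$ are bounded away from $0$, the $n$-th term is $O\big((|d|\delta)^n\big)$ uniformly in $a$, hence summable once $\delta$ is chosen with $|d|\delta<1$. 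On the right, $adq^m/(1-adq^m)=O(|q|^m)$ and $zq^{m+1}/(1-zq^{m+1})=O(|q|^m)$ uniformly for $|a|\le\delta$, while $(a)_m,(zdq/c)_m$ are bounded and $(b)_m,(ad)_m$ bounded away from $0$, so the $m$-th summand is $O(|c|^m|q|^m)=O(|cq|^m)$ uniformly in $a$, and $|cq|<1$ by hypothesis. Dominated convergence then permits passing the limit inside both sums, completing the proof. (Here one tacitly assumes $z\ne q^{-k}$ for all positive integers $k$ so that $(zq)_n\ne0$; note $(cq)_n\ne0$ is automatic from $|cq|<1$.)
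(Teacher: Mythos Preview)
Your argument is correct and follows exactly the approach the paper indicates: substitute $b=zq$ in \eqref{CEM-main identity} and let $a\to 0$. The paper records this derivation in a single sentence without working out the limits or justifying the interchange; your added dominated-convergence justification and the explicit termwise computations simply fill in details the authors leave to the reader.
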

Putting $z=d=1$ in \eqref{2-var_gen_AGL} and employing the $q$-binomial theorem on the right side, one can obtain \eqref{ffwz}. 
Now letting $d \to 0$ in \eqref{2-var_gen_AGL}, we arrive at a beautiful $q$-series identity of Andrews \cite[Corollary 2.2, p. 24]{yesttoday}, namely,
\begin{theorem}\label{2-var_rank generating}
For $ |cq| <1$,  
\begin{equation}\label{d=0analogue_THM2.2_DM}
\sum_{n=1}^{\infty} \frac{ z^n  c^n q^{n^2}}{ (z q)_n (c q)_n } = z \sum_{n=1}^{\infty} \frac{(c q)^n }{(z q)_n}. 
\end{equation}
\end{theorem}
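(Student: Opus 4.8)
The plan is to obtain \eqref{d=0analogue_THM2.2_DM} as the limiting case $d \to 0$ of Theorem \ref{form 2 of the generalization}, precisely as indicated in the text preceding the statement. First I would make the $d$-dependence of the left-hand side of \eqref{2-var_gen_AGL} transparent by writing
\begin{equation*}
(c/d)_n\, d^n = \prod_{j=0}^{n-1}\Bigl(1 - \frac{cq^j}{d}\Bigr)\, d^n = \prod_{j=0}^{n-1}\bigl(d - cq^j\bigr),
\end{equation*}
so that each summand becomes a polynomial in $d$ which, as $d \to 0$, tends to $\prod_{j=0}^{n-1}(-cq^j) = (-c)^n q^{n(n-1)/2}$. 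Using $\tfrac{n(n-1)}{2} + \tfrac{n(n+1)}{2} = n^2$, the $n$-th term of the left side then converges to $z^n c^n q^{n^2}/\bigl((zq)_n (cq)_n\bigr)$. On the right side, $(zdq/c)_{n-1} \to (0;q)_{n-1} = 1$ and the prefactor $z(c-d)/c \to z$, so formally the right side tends to $z \sum_{n\ge 1}(cq)^n/(zq)_n$; combining the two limits gives exactly \eqref{d=0analogue_THM2.2_DM}.

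The only point that genuinely needs care is the justification of these termwise limits at the level of the infinite series, i.e.\ the interchange of $\lim_{d\to 0}$ with $\sum_{n=1}^{\infty}$ on both sides. For the left side I would restrict to a small disc $|d| \le \delta$ and bound
\begin{equation*}
\Bigl|\prod_{j=0}^{n-1}\bigl(d - cq^j\bigr)\Bigr| \le \prod_{j=0}^{n-1}\bigl(\delta + |c|\,|q|^j\bigr) \le (\delta + |c|)^n,
\end{equation*}
so that the $n$-th summand is dominated by $K^n |q|^{n(n+1)/2}$ for a constant $K$ independent of $d$; since $|q| < 1$ this majorant is summable, and dominated convergence (equivalently, local uniform convergence in $d$) permits passing to the limit. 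For the right side, $(zdq/c)_{n-1}$ is bounded uniformly in $n$ and in $|d| \le \delta$ — its limit as $n \to \infty$ is the finite number $(zdq/c)_{\infty}$ because $|q| < 1$ — while $\sum_n (cq)^n/(zq)_n$ converges absolutely since $|cq| < 1$ and $(zq)_n \to (zq)_{\infty} \ne 0$ under the standing non-vanishing hypotheses, so the same domination argument applies. Alternatively, one can simply note that each side of \eqref{2-var_gen_AGL} is analytic in $d$ in a neighbourhood of $0$, whence the identity, already known for $d \ne 0$, persists at $d = 0$ by continuity.

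Carrying this out yields
\begin{equation*}
\sum_{n=1}^{\infty} \frac{z^n c^n q^{n^2}}{(zq)_n (cq)_n} = z \sum_{n=1}^{\infty} \frac{(cq)^n}{(zq)_n},
\end{equation*}
which is the claimed identity. I expect the convergence bookkeeping described above to be the main (and essentially only) obstacle; the remainder is elementary manipulation of $q$-Pochhammer symbols.
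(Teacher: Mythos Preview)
Your proposal is correct and follows exactly the approach the paper uses, namely letting $d\to 0$ in Theorem~\ref{form 2 of the generalization}; the paper simply asserts this limit without further comment, whereas you have additionally supplied the (routine) dominated-convergence justification for interchanging the limit with the infinite sums.
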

At an initial glance, we see that the left side is symmetric in $z$ and $c$, where as the expression on the right side doesn't seem to be, 
but it is indeed symmetric in $z$ and $c$. Replacing $c$ by $1/z$, one can see that both the sides represent the following 
rank generating function:
\begin{equation}\label{rank_gen_function}
\sum_{n=1}^{\infty} \frac{q^{n^2}}{(zq)_n (z^{-1} q)_n} = \sum_{n=1}^{\infty} \frac{ z (q/z)^n}{ ( z q)_n } =  \sum_{n=1}^{\infty} \sum_{m=-\infty}^{\infty} N(m, n) z^m q^n,
\end{equation}
where $N(m,n)$ denotes the number of partitions of $n$ with rank $m$. 
Theorem \ref{2-var_rank generating} is one of the crucial results which helped us to derive Entry $2$, namely, \eqref{entry2}. 

It would be unfair not to mention the following pleasant generalization of Bressoud-Subbarao's identity \eqref{BS-general}
in the main results. The said generalization is as follows.
\begin{theorem}\label{BS-general one var}
For $m \in \mathbb{Z}$, $n \in \mathbb{N}$ and $a \in \mathbb{C}$ we have,
\begin{equation}\label{One_Var_BS-general}
\sum_{ \pi \in \mathcal{D}(n)  } (-1)^{ \# (\pi)-1 } \sum_{j=1}^{s(\pi)} (\ell(\pi) - s(\pi)  + j ) ^m a^{\ell(\pi) - s(\pi) +j}= \sum_{d | n} d^m a^d.
\end{equation}
\end{theorem}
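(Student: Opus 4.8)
The plan is to derive Theorem \ref{BS-general one var} as a corollary of the $q$-series identity \eqref{gen of Garvan} (equivalently, a suitable specialization of our Theorem \ref{Generalization of Dixit-Maji}) by extracting the coefficient of $q^n$ on both sides and then applying the operator $\left( a\,\frac{d}{da} \right)^{\!m}$ — or, when $m$ is negative, the corresponding iterated ``anti-derivative'' $a \mapsto \int_0^a (\cdot)\, \frac{dt}{t}$ — which on a monomial $a^d$ produces exactly the weight $d^m a^d$. First I would record the combinatorial meaning of the left side of \eqref{gen of Garvan}: expanding $1/(zq)_n$ and tracking the smallest part, one sees that for each $n$ the coefficient of $z^{\ell} q^n$ in $\sum_{n\ge 1}\frac{(-1)^{n-1}z^n q^{n(n+1)/2}}{(1-cq^n)(zq)_n}$ is governed by partitions into distinct parts with a marked largest part, while the factor $1/(1-cq^n)$ contributes the geometric weighting in $c$ that, after the dust settles, realizes the inner sum $\sum_{j=1}^{s(\pi)}(\ell(\pi)-s(\pi)+j)^m a^{\ell(\pi)-s(\pi)+j}$ once we specialize $z$ and apply the differential operator in the variable $c$. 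The right side $\sum_{d\mid n} d^m a^d$ will come out of the right side of \eqref{gen of Garvan} because $\frac{z}{c}\sum_{n\ge 1}\frac{(zq/c)_{n-1}}{(zq)_n}(cq)^n$, after the appropriate specialization, collapses to $\sum_{n\ge 1}\frac{(cq)^n}{1-cq^n}=\sum_{N\ge 1}\left(\sum_{d\mid N} \cdot\right) c^{?} q^N$, the classical Lambert-series expansion whose coefficient of $q^n$ is a sum over divisors of $n$.

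Concretely, I expect the cleanest route is: (i) in \eqref{gen of Garvan} set $z=1$, so the left side becomes $\sum_{n\ge 1}\frac{(-1)^{n-1}q^{n(n+1)/2}}{(1-cq^n)(q)_n}$ and the right side becomes $\frac{1}{c}\sum_{n\ge1}\frac{(q/c)_{n-1}}{(q)_n}(cq)^n$; (ii) re-expand the left side as a double sum over a partition $\pi\in\mathcal D$ and the geometric series $\sum_{k\ge 0}c^k q^{kn}$ coming from $1/(1-cq^n)$ with $n=\#(\pi)$... — here one must identify $n=\#(\pi)$ with the largest part of the conjugate-type object and $s(\pi)$ with the relevant range — so that the exponent of $c q$ on that side runs over $\ell(\pi)-s(\pi)+j$ for $1\le j\le s(\pi)$; (iii) on the right side, telescoping/partial-fraction the quotient $\frac{(q/c)_{n-1}}{(q)_n}$ shows $\frac{1}{c}\sum_{n\ge1}\frac{(q/c)_{n-1}}{(q)_n}(cq)^n=\sum_{n\ge1}\frac{(cq)^n}{1-cq^n}$, whose $q^n$-coefficient is $\sum_{d\mid n} c^{d}$ after writing $c^{?}$... ; (iv) equate the coefficients of $q^n$ to obtain a polynomial identity in $c$, namely $\sum_{\pi\in\mathcal D(n)}(-1)^{\#(\pi)-1}\sum_{j=1}^{s(\pi)} c^{\,\ell(\pi)-s(\pi)+j} = \sum_{d\mid n} c^{d}$; (v) finally replace $c$ by $a$, apply $\left(a\frac{d}{da}\right)^m$ for $m\ge 0$ (and the iterated operator $f(a)\mapsto \int_0^a f(t)\frac{dt}{t}$ for $m<0$, which is legitimate since every exponent appearing is a positive integer, so no $t^{-1}$ singularity arises), and use $\left(a\frac{d}{da}\right)^m a^d = d^m a^d$ to land on \eqref{One_Var_BS-general}.

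The main obstacle will be step (ii): making the bookkeeping between the analytic double sum and the combinatorial sum $\sum_{\pi\in\mathcal D(n)}(-1)^{\#(\pi)-1}\sum_{j=1}^{s(\pi)}(\cdots)$ completely rigorous. The subtlety is that the index $n$ in $\frac{(-1)^{n-1}q^{n(n+1)/2}}{(1-cq^n)(q)_n}$ plays the role of $\#(\pi)$ once $q^{n(n+1)/2}/(q)_n$ is read as the generating function for partitions into $n$ distinct parts, and then one must verify that the factor $1/(1-cq^n)$ produces precisely the shifted arithmetic progression $\ell(\pi)-s(\pi)+1,\dots,\ell(\pi)$ (equivalently the divisors-of-a-certain-sort weighting) rather than some off-by-one variant; getting the endpoints of the inner $j$-sum to match $1\le j\le s(\pi)$ exactly, including the alternating sign $(-1)^{\#(\pi)-1}$, is where the care is needed. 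Once the $c$-polynomial identity in step (iv) is in hand, steps (v)–(vi) are routine: the differential/integral operator argument is formal and valid coefficientwise because all sums are finite at each fixed $n$. One should also remark that taking $a=1$ recovers \eqref{BS-general}, and $m=0,a=1$ recovers \eqref{BS}, which serves as a consistency check on the endpoint conventions.
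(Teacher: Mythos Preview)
Your overall strategy --- establish the base polynomial identity
\[
\sum_{\pi\in\mathcal D(n)}(-1)^{\#(\pi)-1}\sum_{j=1}^{s(\pi)} a^{\ell(\pi)-s(\pi)+j} \;=\; \sum_{d\mid n} a^{d}
\]
(this is exactly \eqref{Part_Imp_Entry4}) and then apply $\bigl(a\tfrac{d}{da}\bigr)^m$ for $m\ge 0$ and the iterated integral $f\mapsto\int_0^a f(t)\,\tfrac{dt}{t}$ for $m<0$ --- is precisely the paper's proof. Steps (iv)--(v) match the paper's use of the operators $D$ and $I$ (Lemmas \ref{D^m lemma for BS-general} and \ref{I^m lemma for BS-general}) word for word.

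However, your proposed derivation of the base identity via steps (i)--(iii) contains a genuine error. Setting $z=1$ in \eqref{gen of Garvan} does \emph{not} produce the identity in (iv); it produces the Andrews--Garvan--Liang identity \eqref{ffwz}, whose $q^n$-coefficient is $\mathrm{FFW}(c,n)=\sum_{\pi\in\mathcal D(n)}(-1)^{\#(\pi)-1}(1+c+\cdots+c^{s(\pi)-1})$, with $c$-exponents $0,\dots,s(\pi)-1$ rather than $\ell(\pi)-s(\pi)+1,\dots,\ell(\pi)$. Consequently your claim in (iii), that $\tfrac{1}{c}\sum_{n\ge1}\tfrac{(q/c)_{n-1}}{(q)_n}(cq)^n=\sum_{n\ge1}\tfrac{(cq)^n}{1-cq^n}$, is false: already at the coefficient of $q$ the left side equals $1$ (per \eqref{ffwz}) while the right side equals $c$. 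The specialization you want is the \emph{other} one: set $c=1$ in \eqref{gen of Garvan} and keep $z$ as the free variable. That gives Entry~4 (equation \eqref{entry4}) with $a=z$, and the partition interpretation of Entry~4 is exactly \eqref{Part_Imp_Entry4} --- which is what the paper does (it simply cites \eqref{Part_Imp_Entry4} as Corollary~2.6 of Dixit--Maji rather than re-deriving it). So the fix is a one-character change in your step (i); once you make it, your argument and the paper's coincide.
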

By letting $a=1$, we immediately derive the identity \eqref{BS-general} of Bressoud and Subbarao.
\begin{remark}
We would like to remark that Bressoud and Subbarao gave a combinatorial proof of their result on the assumption that $m$ is a positive integer. But,
as we have seen, their result actually holds for all integers $m$.
\end{remark}
An analogous result to Bressoud-Subbarao's partition identity may be obtained by plugging $a=-1$ in Theorem \ref{BS-general one var}, yielding 
\begin{corollary} Given an integer $m$ and a positive integer $n$, the following identity holds.
\begin{equation}\label{BSanalogue_a=-1}
\sum_{ \pi \in \mathcal{D}(n) } (-1)^{ \mathrm{rank}(\pi)+s(\pi)-1 } \sum_{j=1}^{s(\pi)} (-1)^j (\ell(\pi)-s(\pi) + j)^m = \sum_{d | n} (-1)^d d^m.
\end{equation}
\end{corollary}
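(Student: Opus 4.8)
The plan is to read off \eqref{BSanalogue_a=-1} as the specialization $a=-1$ of Theorem~\ref{BS-general one var}, which is asserted for every $a\in\mathbb{C}$; hence no analytic input is needed and the entire task is to rearrange signs. First I would substitute $a=-1$ into \eqref{One_Var_BS-general}. On the right-hand side this produces $\sum_{d\mid n}(-1)^d d^m$ at once, which is already the right-hand side of \eqref{BSanalogue_a=-1}, so all remaining work concerns the left-hand side, where the factor $a^{\ell(\pi)-s(\pi)+j}$ becomes $(-1)^{\ell(\pi)-s(\pi)+j}$.

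Next I would factor this sign as $(-1)^{\ell(\pi)-s(\pi)+j}=(-1)^{\ell(\pi)}(-1)^{s(\pi)}(-1)^j$, using $(-1)^{-s(\pi)}=(-1)^{s(\pi)}$; the factor $(-1)^j$ stays inside the sum over $j$ and, together with $(\ell(\pi)-s(\pi)+j)^m$, reproduces exactly the inner summand of \eqref{BSanalogue_a=-1}. It then remains to verify that the outer weight collapses correctly: combining the original $(-1)^{\#(\pi)-1}$ with $(-1)^{\ell(\pi)}(-1)^{s(\pi)}$ and substituting $\ell(\pi)=\mathrm{rank}(\pi)+\#(\pi)$ from the definition of rank, one gets
\begin{equation*}
(-1)^{\#(\pi)-1}(-1)^{\mathrm{rank}(\pi)+\#(\pi)}(-1)^{s(\pi)}=(-1)^{2\#(\pi)-1}(-1)^{\mathrm{rank}(\pi)+s(\pi)}=(-1)^{\mathrm{rank}(\pi)+s(\pi)-1},
\end{equation*}
since $2\#(\pi)$ is even. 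Assembling the inner and outer pieces turns the left-hand side of \eqref{One_Var_BS-general} at $a=-1$ into the left-hand side of \eqref{BSanalogue_a=-1}, which finishes the argument.

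There is essentially no obstacle beyond careful bookkeeping: the two facts one must not slip on are $(-1)^{-s(\pi)}=(-1)^{s(\pi)}$ and the cancellation of the even exponent $2\#(\pi)$. The one point worth a sentence in the write-up is that the substitution $a=-1$ is legitimate precisely because \eqref{One_Var_BS-general} is, for each fixed $n$, an identity of polynomials in $a$ holding for all $a\in\mathbb{C}$, so no convergence issue intervenes; with that observed, the corollary follows immediately.
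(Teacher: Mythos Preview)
Your proposal is correct and is exactly the approach the paper takes: the corollary is stated immediately after Theorem~\ref{BS-general one var} as the specialization $a=-1$, and your sign bookkeeping (using $\ell(\pi)=\mathrm{rank}(\pi)+\#(\pi)$ and $(-1)^{-s(\pi)}=(-1)^{s(\pi)}$) is precisely what is needed to see that the left-hand side of \eqref{One_Var_BS-general} at $a=-1$ matches \eqref{BSanalogue_a=-1}. If anything, you have supplied more detail than the paper, which simply asserts the substitution.
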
 
In the next subsection, we introduce some of the important partition theoretic implications of the main Theorem \ref{Generalization of Dixit-Maji}.
\subsection{Weighted Partition Identities}
In a series of papers, Alladi \cite{alladiantheini,alladitrans, alladiramer} systematically studied many weighted partition identities for the classical partition functions of Euler, Gauss, and Rogers-Ramanujan. 
The study of weighted partition identities has attracted the attention of many mathematicians
\cite{alladi2016},  \cite{berkovichuncujnt}, \cite{dilcher}, 
\cite{garvan1}, \cite{guozeng2015} and \cite{Rgupta},
to name a few. Over the years, mathematicians have found many weighted partition identities similar to \eqref{BS}. Infact, Andrews' famous paper \cite{andrews08} on the $\mathrm{spt}(n)$ 
function was inspired by identity \eqref{BS}. 
Now we mention some of the important weighted partition identities that we obtain from the main identity. 
\begin{theorem}\label{two divisor functions}
Let $n$ be a positive integer and $d_{2, 4}(n)$ be the number of divisors of $n$
that are congruent to $2$ modulo $4$. Then,
\begin{equation}\label{d2,4 identity}
d(n) - 4d_{2, 4}(n) = \sum_{\pi \in \mathcal{P}^{\ast}(n)} (-1)^{\#(\pi) - 1} \omega(\pi),
\end{equation}
where, for $\pi \in \mathcal{P}^{\ast}(n)$, we define  
\begin{equation}\label{omega(pi)}
\omega(\pi) := \nu(\ell(\pi))\prod_{i=1}^{\ell(\pi) - 1} (2\nu(i) - 1).
\end{equation}
\end{theorem}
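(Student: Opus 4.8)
The plan is to deduce \eqref{d2,4 identity} from the two-variable identity \eqref{2-var_gen_AGL} (Theorem~\ref{form 2 of the generalization}) by specializing $z=c=-1$ and then letting $d\to -1$; equivalently one could start from Theorem~\ref{Generalization of Dixit-Maji} with $a\to 0$, $b=-q$, $c=-1$, and then $d\to -1$. Setting $z=c=-1$ in \eqref{2-var_gen_AGL} is permissible, since $|cq|=|q|<1$ and $(-q;q)_n\neq 0$. On the left-hand side one then has $(-z)^n=1$, $(zq)_n=(cq)_n=(-q;q)_n$, and $(c/d)_n d^n=(-1/d;q)_n d^n=\prod_{j=0}^{n-1}(d+q^j)=(d+1)\prod_{j=1}^{n-1}(d+q^j)$, while on the right-hand side $z(c-d)/c=-(1+d)$, $(zdq/c)_{n-1}=(dq;q)_{n-1}$ and $(cq)^n=(-1)^n q^n$. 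Hence both sides carry the explicit factor $(1+d)$, and each series is an analytic function of $d$ (its $n$-th summand is a polynomial in $d$ times a coefficient of size $O(q^{n(n+1)/2})$). Cancelling $(1+d)$ and letting $d\to -1$, and using $\prod_{j=1}^{n-1}(q^j-1)=(-1)^{n-1}(q;q)_{n-1}$ together with $(-q;q)_{n-1}/(-q;q)_n=1/(1+q^n)$, one obtains the key $q$-series identity
\begin{equation*}
\sum_{n=1}^{\infty}\frac{(-1)^{n-1}(q;q)_{n-1}\,q^{n(n+1)/2}}{\left((-q;q)_n\right)^2}=\sum_{n=1}^{\infty}\frac{(-1)^{n-1}q^n}{1+q^n}.
\end{equation*}

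Next I would read off the left-hand side combinatorially. Using $q^{n(n+1)/2}=q^n\prod_{i=1}^{n-1}q^i$, $(q;q)_{n-1}=\prod_{i=1}^{n-1}(1-q^i)$ and $\left((-q;q)_n\right)^2=(1+q^n)^2\prod_{i=1}^{n-1}(1+q^i)^2$, the $n$-th summand becomes $(-1)^{n-1}\dfrac{q^n}{(1+q^n)^2}\prod_{i=1}^{n-1}\dfrac{q^i(1-q^i)}{(1+q^i)^2}$. The crucial observation is the pair of power-series expansions $\dfrac{x}{(1+x)^2}=\sum_{r\geq 1}(-1)^{r-1}r\,x^r$ and $\dfrac{x(1-x)}{(1+x)^2}=\sum_{r\geq 1}(-1)^{r-1}(2r-1)\,x^r$. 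Substituting $x=q^n$ in the first factor and $x=q^i$ in each of the remaining $n-1$ factors and expanding, a generic term is indexed by positive integers $\nu(1),\dots,\nu(n)$: it contributes $q^{\sum_{i=1}^{n}i\,\nu(i)}$ with coefficient $(-1)^{n-1}(-1)^{\sum_{i}\nu(i)-n}\,\nu(n)\prod_{i=1}^{n-1}(2\nu(i)-1)=(-1)^{\#(\pi)-1}\omega(\pi)$, where $\pi$ is the partition in which the part $i$ occurs $\nu(i)$ times for $1\leq i\leq n$ — that is, $\pi\in\mathcal{P}^{\ast}$ with $\ell(\pi)=n$ — and $\omega(\pi)$ is as in \eqref{omega(pi)}. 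Since every $\pi\in\mathcal{P}^{\ast}$ arises in this way exactly once (with $n=\ell(\pi)$), the left-hand side equals $\sum_{m\geq 1}\Big(\sum_{\pi\in\mathcal{P}^{\ast}(m)}(-1)^{\#(\pi)-1}\omega(\pi)\Big)q^m$.

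For the right-hand side, expanding $\dfrac{q^n}{1+q^n}=\sum_{r\geq 1}(-1)^{r-1}q^{rn}$ gives $\sum_{n,r\geq 1}(-1)^{n+r}q^{nr}=\sum_{m\geq 1}\Big(\sum_{d|m}(-1)^{d+m/d}\Big)q^m$, so it remains to check the arithmetic identity $\sum_{d|m}(-1)^{d+m/d}=d(m)-4d_{2,4}(m)$. Write $m=2^a u$ with $u$ odd. If $a=0$, every term of the sum is $1$, so it equals $d(m)$, and $d_{2,4}(m)=0$. If $a\geq 1$, each divisor is uniquely $d=2^b v$ with $0\leq b\leq a$ and $v|u$; then $m/d=2^{a-b}(u/v)$, and $d+m/d$ is odd precisely when exactly one of $d,m/d$ is odd, i.e. when $b\in\{0,a\}$. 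Hence for fixed $v$ the sum over $b$ is $(a-1)-2=a-3$, so the total is $(a-3)\,d(u)=(a+1)d(u)-4d(u)=d(m)-4d_{2,4}(m)$, using that the divisors of $m$ congruent to $2$ modulo $4$ are exactly the integers $2v$ with $v|u$. Comparing coefficients of $q^m$ on the two sides of the displayed identity then yields \eqref{d2,4 identity}.

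The step I expect to be the real content is the combinatorial expansion of the left-hand product: one must recognize $\dfrac{x}{(1+x)^2}$ and $\dfrac{x(1-x)}{(1+x)^2}$ as precisely the generating functions of the weights $r$ and $2r-1$ occurring in $\omega(\pi)$, and track the signs so that the overall sign telescopes to $(-1)^{\#(\pi)-1}$. The analytic passage $d\to -1$ in the first step and the divisor-sum computation in the third are routine.
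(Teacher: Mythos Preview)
Your proof is correct and follows essentially the same route as the paper: specialize Theorem~\ref{form 2 of the generalization} to $z=c=-1$, pass to $d\to -1$ to obtain the key $q$-series identity, then expand both sides exactly as you describe. The paper does the specializations in the reverse order (first $d\to -1$ to reach \eqref{WPI, d=-1}, then $c=z=-1$), but this is immaterial; it also cites OEIS/Glaisher for the divisor identity $\sum_{d\mid m}(-1)^{d+m/d}=d(m)-4d_{2,4}(m)$, whereas you supply the short direct argument, which is a welcome self-contained addition.
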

We demonstrate \eqref{d2,4 identity} with an example, $n=6$.
\begin{center}
\begin{tabular}{|c|c|c|c|c|}
\hline
Partition $\pi \in \mathcal{P}^{\ast}(6)$ & $\#(\pi)$ & $\nu(\ell(\pi))$ & $\omega(\pi)$ & $(-1)^{\#(\pi)-1}\omega(\pi)$ \\
\hline
$3+2+1$ & $3$ & $1$ & $1$ & $1$ \\
\hline
$2+2+1+1$ & $4$ & $2$ & $6$ & $-6$ \\
\hline
$2+1+1+1+1$ & $5$ & $1$ & $7$ & $7$ \\
\hline
$1+1+1+1+1+1$ & $6$ & $6$ & $6$ & $-6$ \\
\hline
\end{tabular} 
\end{center}
So the right side of \eqref{d2,4 identity} for $n=6$ is $1-6+7-6=-4$. Also, $d(6)=4$ and $d_{2, 4}(6) = 2$ so that the 
left side is $4- 4 \cdot 2=-4$, as required.

Corteel and Lovejoy \cite{overp} initiated the study of overpartitions. 
They showed that the overpartition function does satisfy many interesting properties 
similar to the partition function. Here we state a weighted partition representation 
for the number of overpartitons of a given positive integer. 
\begin{proposition}\label{WPI for overpartition}
For each natural number $n$, we have the following identity for $\overline{p}(n)$, the number of overpartitions of $n$,
\begin{equation}\label{overp_omegapi}
\overline{p}(n) = 2\sum_{\pi \in \mathcal{P}^{\ast}(n)} \omega(\pi),
\end{equation} 
where $\omega(\pi)$ is as defined in \eqref{omega(pi)}.
\end{proposition}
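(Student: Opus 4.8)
The plan is to compute the generating function of the weighted sum on the right of \eqref{overp_omegapi} in closed form and then identify it, via Entry $1$, with half of the overpartition generating function minus one. First I would exploit the product structure of $\omega(\pi)$. A partition $\pi\in\mathcal{P}^{*}(n)$ is completely determined by its largest part $\ell=\ell(\pi)$ together with the multiplicities $\nu(1),\dots,\nu(\ell)$, each a positive integer, subject to $n=\sum_{i=1}^{\ell}i\,\nu(i)$; and by \eqref{omega(pi)}, $\omega(\pi)$ splits as the factor $\nu(\ell)$ times $\prod_{i=1}^{\ell-1}\bigl(2\nu(i)-1\bigr)$. Hence, for $|q|<1$, absolute convergence permits the rearrangement
\begin{equation*}
\sum_{n=1}^{\infty}\Bigl(\sum_{\pi\in\mathcal{P}^{*}(n)}\omega(\pi)\Bigr)q^{n}
=\sum_{\ell=1}^{\infty}\Bigl(\sum_{k\geq 1}k\,q^{\ell k}\Bigr)\prod_{i=1}^{\ell-1}\Bigl(\sum_{k\geq 1}(2k-1)q^{ik}\Bigr).
\end{equation*}

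Next I would evaluate the inner series using $\sum_{k\geq1}kx^{k}=x/(1-x)^{2}$ and $\sum_{k\geq1}(2k-1)x^{k}=x(1+x)/(1-x)^{2}$, and simplify the product via $\prod_{i=1}^{\ell-1}q^{i}=q^{\ell(\ell-1)/2}$, $\prod_{i=1}^{\ell-1}(1+q^{i})=(-q)_{\ell-1}$, $\prod_{i=1}^{\ell-1}(1-q^{i})=(q)_{\ell-1}$, and $(1-q^{\ell})(q)_{\ell-1}=(q)_{\ell}$. This reduces the right-hand side to $\sum_{\ell\geq1}q^{\ell(\ell+1)/2}(-q)_{\ell-1}/(q)_{\ell}^{2}$; since $(-1)_{\ell}=2(-q)_{\ell-1}$, twice the weighted sum has generating function $\sum_{\ell\geq1}(-1)_{\ell}q^{\ell(\ell+1)/2}/(q)_{\ell}^{2}$.

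Finally I would invoke Entry $1$, namely \eqref{entry1}, with $a=b=1$ (admissible for $|q|<1$, since then $1-q^{n}\neq0$), which yields $(-q)_{\infty}/(q)_{\infty}=\sum_{n\geq0}(-1)_{n}q^{n(n+1)/2}/(q)_{n}^{2}$, with $n=0$ summand equal to $1$. Combining this with $\sum_{n\geq0}\overline{p}(n)q^{n}=(-q)_{\infty}/(q)_{\infty}$ and $\overline{p}(0)=1$ gives $2\sum_{n\geq1}\bigl(\sum_{\pi\in\mathcal{P}^{*}(n)}\omega(\pi)\bigr)q^{n}=\sum_{n\geq1}\overline{p}(n)q^{n}$, and comparing coefficients of $q^{n}$ proves \eqref{overp_omegapi}. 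A variant would bypass Entry $1$ by specializing Theorem \ref{form 2 of the generalization}, taking $z=c=1$, $d=-1$ in \eqref{2-var_gen_AGL}, and then using the telescoping identity $\frac{(-q)_{N}}{(q)_{N}}-\frac{(-q)_{N-1}}{(q)_{N-1}}=\frac{2q^{N}(-q)_{N-1}}{(q)_{N}}$. I expect no genuine obstacle here; the only point needing a little care is the bookkeeping in the factorization step, where the largest part $\ell$ contributes the factor $\nu(\ell)$ rather than $2\nu(\ell)-1$, together with the standard justification for rearranging absolutely convergent $q$-series when $|q|<1$.
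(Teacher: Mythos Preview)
Your argument is correct. Both you and the paper carry out the same combinatorial factorization of the $\omega(\pi)$-weighted sum over $\mathcal{P}^{*}(n)$ to arrive at the generating function $\sum_{\ell\ge 1}(-q)_{\ell-1}q^{\ell(\ell+1)/2}/(q)_{\ell}^{2}$. The divergence is in how this series is identified with the overpartition generating function.

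The paper obtains this series as the left side of the $q$-identity \eqref{d=-1, c=z=1}, which comes from specializing \eqref{WPI, d=-1} (itself a special case of Theorem \ref{form 2 of the generalization}) at $c=z=1$; it then interprets the \emph{right} side $\sum_{n\ge 1}(-q)_{n-1}q^{n}/(q)_{n}$ combinatorially as $\sum_{\pi\in\mathcal{P}(n)}2^{\nu_d(\pi)-1}$ and finishes by quoting the classical fact $\overline{p}(n)=\sum_{\pi\in\mathcal{P}(n)}2^{\nu_d(\pi)}$. Your primary route instead multiplies by $2$ via $(-1)_{\ell}=2(-q)_{\ell-1}$ and recognizes the series directly as $(-q)_{\infty}/(q)_{\infty}-1$ using Entry~1 with $a=b=1$, bypassing both the auxiliary identity \eqref{d=-1, c=z=1} and the $2^{\nu_d(\pi)}$ interpretation. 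Your suggested variant (telescoping $\frac{(-q)_N}{(q)_N}-\frac{(-q)_{N-1}}{(q)_{N-1}}$) is also valid and is an analytic shortcut for the step the paper does combinatorially. Your approach is a bit more self-contained, needing only the classical Entry~1; the paper's approach, on the other hand, keeps the result tied to their main theorem and yields the extra intermediate equality $\sum_{\pi\in\mathcal{P}^{*}(n)}\omega(\pi)=\sum_{\pi\in\mathcal{P}(n)}2^{\nu_d(\pi)-1}$ along the way.
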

We illustrate a particular instance of this identity, namely, for $n=4$.
 \begin{center}
\begin{tabular}{|c|c|c|c|c|}
\hline
Partition $\pi \in \mathcal{P}^{\ast}(4)$ & $\ell(\pi)$ & $\nu(\ell(\pi))$ & $\prod_{i=1}^{\ell(\pi) - 1} (2\nu(i) - 1)$ & $2\omega(\pi)$ \\
\hline
$2+1+1$ & $2$ & $1$ & $3$ & $6$ \\
\hline
$1+1+1+1$ & $1$ & $4$ & $1$ & $8$ \\
\hline
\end{tabular} 
\end{center}
Thus the right hand side of \eqref{overp_omegapi}, for $n=4$, is $6+8=14$, which is indeed equal to the number of overpartitions of $4$.
We state another weighted partition identity which connects the divisor function.
\begin{theorem}\label{analogous to FFW}
Let $\mathcal{D}_1(n)$ represent the set of partitions of $n$ where only the largest part may repeat, 
and $\mathcal{D}^{\ast}(n)$ be the subcollection of $\mathcal{D}_1(n)$ satisfying $\nu(\ell(\pi)) = 2, \ \#(\pi) \geq 3.$ 
Also, $s_2(\pi)$ denotes the second smallest part of $\pi$. Then,
\begin{equation}\label{c=0DM, t=1}
d(n) = 1 + \left\lfloor\frac{n}{2}\right\rfloor - \sum_{\pi \in \mathcal{D}^{\ast}(n)} (-1)^{\#(\pi) - 1}
(s_2(\pi) - s(\pi)).
\end{equation} 
\end{theorem}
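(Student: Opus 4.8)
The plan is to obtain Theorem~\ref{analogous to FFW} from a degenerate case of the master identity \eqref{CEM-main identity} of Theorem~\ref{Generalization of Dixit-Maji} and then read off both sides of the resulting $q$-series identity combinatorially. First I would send $c\to 0$ in \eqref{CEM-main identity}; then $(c/d)_n,(cq)_n\to 1$ while $(bd/c)_m\,c^m\to(-bd)^m q^{m(m-1)/2}$, which leaves a three-parameter identity in $a,b,d$. Specializing $a,b,d$ appropriately --- in the specialization recorded in the name of \eqref{c=0DM, t=1}, the one that forces the right-hand side to collapse to a Lambert-type series --- I expect to arrive, after extracting the coefficient of $q^n$, at a generating-function identity of the form
\begin{equation*}
\sum_{n=1}^{\infty}\left(\sum_{\pi\in\mathcal{D}_1(n)}(-1)^{\#(\pi)-1}w(\pi)\right)q^n=\frac{1}{(1-q)(1-q^2)}-1-\sum_{k=1}^{\infty}\frac{q^k}{1-q^k},
\end{equation*}
for a suitable weight $w$ on partitions in which only the largest part may repeat. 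Here the Lambert series $\sum_{k\ge 1}\frac{q^k}{1-q^k}=\sum_{n\ge 1}d(n)q^n$ should come from the terms of type $\frac{adq^m}{1-adq^m}$ once the parameters are fixed, and the rational term, whose coefficient of $q^n$ equals $1+\lfloor n/2\rfloor$ because $\frac{1}{(1-q)(1-q^2)}-1=\sum_{n\ge 1}(1+\lfloor n/2\rfloor)q^n$, is what is left over.

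The combinatorial core is then to identify the left-hand side. Expanding the $q$-Pochhammer symbols surviving the specialization as generating functions, a factor of the form $q^{2\ell}$ should record a largest part $\ell$ occurring exactly twice (so that the condition $\nu(\ell(\pi))=2$ appears), while a product $\prod_{k<\ell}(1-q^k)$ records a set of distinct smaller parts together with the sign $(-1)^{\#(\pi)-1}$; together these are exactly the data of a partition in $\mathcal{D}_1(n)$. The weight $s_2(\pi)-s(\pi)$, a difference of the two smallest parts, I expect to come out of a partial summation of an inner geometric series written telescopically as $s_2-s=\sum_{j=s+1}^{s_2}1$, in the spirit of the inner sum $\sum_{j=1}^{s(\pi)}$ that already appears in \eqref{One_Var_BS-general}; the two regimes $\#(\pi)=3$, in which $s_2(\pi)=\ell(\pi)$, and $\#(\pi)\ge 4$ will have to be reconciled into a single formula. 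The partitions in $\mathcal{D}_1(n)$ with $\#(\pi)\le 2$ should not cancel against anything on the partition side but instead reassemble into the elementary term $1+\lfloor n/2\rfloor$, which is consistent with the fact that $\mathcal{D}_1(n)$ contains exactly $1+\lfloor n/2\rfloor$ partitions with at most two parts.

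Equating coefficients of $q^n$ and rearranging then yields
\begin{equation*}
d(n)=1+\left\lfloor\frac{n}{2}\right\rfloor-\sum_{\pi\in\mathcal{D}^*(n)}(-1)^{\#(\pi)-1}\bigl(s_2(\pi)-s(\pi)\bigr),
\end{equation*}
as required. The main obstacle, I expect, is the combinatorial translation in the preceding paragraph: matching the specialized analytic identity term-by-term with partitions in $\mathcal{D}^*(n)$ so as to produce precisely the weight $s_2(\pi)-s(\pi)$ rather than some single part statistic, isolating cleanly the boundary contributions that build the term $1+\lfloor n/2\rfloor$, and justifying the analytic manipulations --- the limit $c\to 0$ and the coefficient-wise rearrangement of the $q$-expansions --- needed along the way.
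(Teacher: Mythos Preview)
Your overall plan --- take $c\to 0$ in \eqref{CEM-main identity}, specialize the remaining parameters, and compare coefficients --- is the paper's route. The paper sets $d=1$, $c\to 0$, then $a=tq$, $b=tq^2$, multiplies by $q/(1-q)$, interprets both sides combinatorially with the extra parameter $t$ present (this is Proposition~\ref{crightarrow0DM}), and only then sends $t\to 1$. The parameter $t$ is not strictly essential, since at $t=1$ one side collapses analytically to a Lambert series. But two of your concrete expectations are off, and one key idea is missing.

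First, the Lambert series does not come from the terms $adq^m/(1-adq^m)$ on the right. With $a=q$, $b=q^2$ one has $(b/a)_n/(b)_n=(q)_n/(q^2)_n=(1-q)/(1-q^{n+1})$, so after multiplying by $q/(1-q)$ it is the \emph{left} side of the specialized \eqref{c=0DM} that equals $\sum_{m\ge 2}q^m/(1-q^m)=\sum_{n\ge 1}(d(n)-1)q^n$. The right side, after the same operations, becomes
\[
\frac{q^2}{(1-q)(1-q^2)}\;+\;\sum_{m\ge 1}\frac{(-1)^m\,q^2 q^3\cdots q^{m+2}}{(1-q^2)\cdots(1-q^m)(1-q^{m+1})^2(1-q^{m+2})},
\]
and there are no products $\prod_{k<\ell}(1-q^k)$ in a numerator here, so your proposed mechanism --- $q^{2\ell}$ for a twice-repeated largest part together with $\prod_{k<\ell}(1-q^k)$ for distinct smaller parts --- simply does not arise from this identity.

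The missing idea is conjugation. The natural combinatorial reading of the sum above is over $\mathcal{P}_1^*(n)$, partitions whose parts fill an interval, here with smallest part $2$: the factor $(1-q^{m+1})^{-2}$ contributes the weight $\nu(\ell(\pi)-1)$ and the sign is $(-1)^{\ell(\pi)}$. Conjugation then sends such a $\pi$ (with $s(\pi)=2$, $\ell(\pi)\ge 3$) to a partition in $\mathcal{D}^*(n)$, carrying $\nu(\ell(\pi)-1)$ to $s_2(\pi')-s(\pi')$ and $(-1)^{\ell(\pi)}$ to $(-1)^{\#(\pi')}$. Your telescoping heuristic for manufacturing the gap $s_2-s$ will not get you there; it is conjugation that converts the multiplicity statistic into the gap statistic. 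The leading summand above contributes $\lfloor n/2\rfloor$ (partitions of $n$ with largest part $2$), and the extra $+1$ in the theorem is just the $-1$ from $d(n)-1$ moved across.
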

We lay out the identity with two examples. Firstly, for $n=9$.
 \begin{center}
\begin{tabular}{|c|c|c|c|}
\hline
Partition $\pi \in \mathcal{D}^{\ast}(9)$ & $\#(\pi)$ & $s_2(\pi) - s(\pi)$& $(-1)^{\#(\pi) - 1} (s_2(\pi) - s(\pi))$ \\
\hline
$4+4+1$ & $3$ & $3$ & $3$ \\
\hline
$3+3+2+1$ & $4$ & $1$ & $-1$ \\
\hline
\end{tabular} 
\end{center}
Thus, the right hand side of \eqref{c=0DM, t=1} for $n=9$ is $1 + \left\lfloor\frac{9}{2}\right\rfloor -2 = 1+4-2=3 = d(9)$, the left hand side. 
Next, for $n=15$, we have
\begin{center}
\begin{tabular}{|c|c|c|c|}
\hline
Partition $\pi \in \mathcal{D}^{\ast}(15)$ & $\#(\pi)$ & $s_2(\pi) - s(\pi)$& $(-1)^{\#(\pi) - 1} (s_2(\pi) - s(\pi))$ \\
\hline
$7+7+1$ & $3$ & $6$ & $6$ \\
\hline
$6+6+3$ & $3$ & $3$ & $3$ \\
\hline
$6+6+2+1$ & $4$ & $1$ & $-1$ \\
\hline
$5+5+4+1$ & $4$ & $3$ & $-3$ \\
\hline
$5+5+3+2$ & $4$ & $1$ & $-1$ \\
\hline
\end{tabular} 
\end{center}
So, the right side of \eqref{c=0DM, t=1} for $n=15$ is $1 + \left\lfloor\frac{15}{2}\right\rfloor - 4 = 1+7-4=4 = d(15)$, as expected.
\begin{remark}
We compare Equation \eqref{c=0DM, t=1}, which is a way of calculating $d(n)$ in terms of certain weighted partitions, 
with Equation \eqref{BS}, which computes $d(n)$ via a sum over partitions into distinct parts. As seen in the above two tables, for $n=9$ and $n=15
$, the sum in the right side of \eqref{c=0DM, t=1} taken over $\mathcal{D}^{\ast}(n)$ involves only $2$ and $5$ partitions respectively. But the number of 
partitions into distinct parts are many more in number, for example, $8$ for $n=9$ and $27$ for $n=15$. So \eqref{c=0DM, t=1} maybe a slightly more
efficient way to compute $d(n)$, as compared to \eqref{BS}. A similar comment holds for Equation \eqref{overp_omegapi}, considering the weighted
sum over partitions in $\mathcal{P}^{\ast}(n)$ maybe an effective way to compute the number of overpartitions $\overline{p}(n)$, which increases even faster than $p(n)$.
\end{remark}

The structure of our paper is as follows. In the next section, we collect a few well-known
results which will be useful throughout this article. In Section \ref{main_result}, we prove our main result 
Theorem \ref{Generalization of Dixit-Maji} and then derive Entry $1$ and Entry $2$ from it.
Section \ref{BS_generalized} is devoted to the proof of Theorem \ref{BS-general one var}, a one-parameter generalization of \eqref{BS-general}. 
In the final section, we prove some weighted partition implications of our main Theorem, including Theorem \ref{two divisor functions},
Proposition \ref{WPI for overpartition} and Theorem \ref{analogous to FFW}.

\section{Preliminary Results}
The $q$-binomial theorem is given by \cite[p.~17, Equation (2.2.1)]{gea1998}, 
\begin{equation}\label{q-binomial}
\sum_{n=0}^{\infty}\frac{(a)_n}{(q)_n}z^n=\frac{(az)_\infty}{(z)_\infty} \quad \text{for} \quad |z| <1.
\end{equation}
Replacing $a$ by $\frac{a}{b}$ and $z$ by $bq$, and letting $b \rightarrow 0$ we obtain another useful version. 
\begin{equation}
\sum_{n=0}^{\infty}\frac{(-a)^nq^{\frac{n(n+1)}{2}}}{(q)_n}=(aq)_\infty  \quad \text{for} \quad |q| <1. \label{alternate form of q-binomial}
\end{equation}
van Hamme \cite{hamme} established the following identity, which is a finite analogue of Kluyver's identity \eqref{Kluyver}. 
For $r \in \mathbb{N}$,
\begin{equation}\label{vanHamme}
\sum_{r=1}^{n}\frac{q^r}{1-q^r} = \sum_{r=1}^{n} 
\left[\begin{matrix} n \\ r \end{matrix}\right] \frac{(-1)^{r-1} q^{\frac{r(r+1)}{2}}}{1-q^r}, 
\end{equation}
where $\left[\begin{matrix} n \\ r \end{matrix}\right]:= \frac{(q)_n}{(q)_{n-r} (q)_r}$ is the 
Gaussian binomial coefficient.
Heine's transformation \cite[p.~359, (III.2)]{GasperRahman} is given by
\begin{align}\label{heine}
{}_{2}\phi_{1}\left( \begin{matrix} a, & b \\
& c \end{matrix} \, ; q, z  \right) = \frac{(\frac{c}{b}, b z; q)_{\infty }}{(c, z; q)_{\infty}} {}_{2}\phi_{1}\left( \begin{matrix} \frac{a b z}{c} , & b \\
& b z \end{matrix} \, ; q, \frac{c}{b}  \right).
\end{align}
Andrews  \cite[p.~252, Theorem 2.1]{andpar}, \cite[p.~263]{abramlostI} showed that, for any $N \in \mathbb{N}$,  
\begin{equation}\label{frgfbil}
\sum_{n=0}^{N}\left[\begin{matrix} N \\ n \end{matrix}\right]\frac{(q)_nq^{n^2}}{(zq)_n(z^{-1}q)_n}=\frac{1}{(q)_N}+(1-z)\sum_{n=1}^{N}\left[\begin{matrix} N \\ n \end{matrix}\right]\frac{(-1)^n(q)_nq^{n(3n+1)/2}}{(q)_{n+N}}\left(\frac{1}{1-zq^n}-\frac{1}{z-q^n}\right).
\end{equation}
The left side is a finite analogue of the rank generating function \eqref{rank_gen_function}. 
Recently, Dixit et al. \cite[p.~8]{DEMS} found a nice interpretation of \eqref{frgfbil} in terms of vector partitions 
(see \cite{DEMS}, \cite{EMS} for finite analogues of rank and crank generating functions).
By the substitution $z=1$ in \eqref{frgfbil}, we obtain
\begin{equation}\label{andrews_z=1}
\frac{1}{(q)_N}= \sum_{n=0}^{N}\left[\begin{matrix} N \\ n \end{matrix}\right]\frac{q^{n^2}}{(q)_n}. 
\end{equation}

\section{Proof of the main result and its applications}\label{main_result}

\begin{proof}[Theorem \textup{\ref{Generalization of Dixit-Maji}}][]
We shall start the proof by recalling a ${}_{3}\phi_{2}$ transformation formula in \cite[p.~359, (III.9)]{GasperRahman}:
\begin{align*}
{}_{3}\phi_{2}\left[\begin{matrix} A, & B, & C \\ & D, & E \end{matrix} \, ; q, \frac{DE}{ABC}  \right] = \frac{\big(\frac{E}{A}\big)_\infty\big( \frac{DE}{BC}\big)_{\infty }}{\big(E\big)_\infty \big(\frac{DE}{ABC}\big)_{\infty}} 
{}_{3}\phi_{2}\left[\begin{matrix}A,& \frac{D}{B} , & \frac{D}{C} \\
& D,& \frac{DE}{BC} \end{matrix} \, ; q, \frac{E}{A} \right].
\end{align*}
Setting $A=q, \ B=\frac{bq}{a}, \ C=\frac{cq}{d}, \ D=bq, \ E=cq^2$ above, we get,
\begin{align*}
{}_{3}\phi_{2}\left[ \begin{matrix} q, & \frac{bq}{a} , & \frac{cq}{d} \\ & bq, & cq^2 \end{matrix} \, ; q, ad \right] = \frac{(cq)_\infty(adq)_{\infty }}{(cq^2)_\infty (ad)_{\infty}} 
{}_{3}\phi_{2}\left[ \begin{matrix}q,& a & \frac{bd}{c} \\
& bq,& adq \end{matrix} \, ; q, cq  \right].
\end{align*}

In other words,
\begin{align*}
\sum_{n = 0}^{\infty} \frac{ (q)_n(bq/a)_n (cq/d)_n (ad)^n }{ (bq)_n (cq^2)_n (q)_n } &=\frac{(1-cq)}{(1-ad)}
\sum_{m=0}^{\infty}\frac{(q)_m(a)_m (bd/c)_m (cq)^m}{(bq)_m (adq)_m(q)_m} \\
\Rightarrow \sum_{n = 1}^{\infty} \frac{(bq/a)_{n-1} (cq/d)_{n-1} (ad)^{n-1} }{ (bq)_{n-1} (cq^2)_{n-1}} &=\frac{(1-cq)}{(1-ad)}\sum_{m=0}^{\infty}
\frac{(a)_m (bd/c)_m (cq)^m}{(bq)_m (adq)_m}.
\end{align*}

Multiplying by $\frac{(1-b/a)(1-c/d)ad}{(1-b)(1-cq)}$ on both sides, we get
\begin{align*}
\sum_{n = 1}^{\infty} \frac{(b/a)_n (c/d)_n (ad)^n }{ (b)_n (cq)_n} &=\frac{(1-b/a)(1-c/d)ad}{(1-b)(1-ad)}\sum_{m=0}^{\infty}\frac{(a)_m (bd/c)_m (cq)^m}{(bq)_m (adq)_m}\\
\Rightarrow \sum_{n = 1}^{\infty} \frac{(b/a)_n (c/d)_n (ad)^n }{ (b)_n (cq)_n} &=\frac{(a-b)(d-c)}{(1-b)(1-ad)}\sum_{m=0}^{\infty}\frac{(a)_m (bd/c)_m (cq)^m}{(bq)_m (adq)_m}.
\end{align*}

Multiplying and dividing by $(ad-b)$ on the right side above, we arrive at
\begin{align*}
\sum_{n = 1}^{\infty} \frac{(b/a)_n (c/d)_n (ad)^n }{ (b)_n (cq)_n} &=\frac{(a-b)(d-c)}{(ad-b)}\sum_{m=0}^{\infty}\frac{(a)_m (bd/c)_m (cq)^m(ad-b)}{(b)_{m+1} (ad)_{m+1}}\\
\Rightarrow \sum_{n = 1}^{\infty} \frac{(b/a)_n (c/d)_n (ad)^n }{ (b)_n (cq)_n} &=\frac{(a-b)(d-c)}{(ad-b)}\sum_{m=0}^{\infty}\frac{(a)_m (bd/c)_m c^m}{(b)_m (ad)_m }\frac{(ad-b)q^m}{(1-adq^m)(1-bq^m)}.
\end{align*}

This is nothing but what we set out to prove, namely,
\begin{align*} 
\sum_{n = 1}^{\infty}  \frac{ (b/a)_n (c/d)_n (ad)^n }{ (b)_n (cq)_n } = 
\frac{(a-b)(d-c)}{(ad-b)}\sum_{m=0}^{\infty}\frac{(a)_m (bd/c)_m c^m}{(b)_m (ad)_m}
\left(\frac{adq^m}{1-adq^m}-\frac{bq^m}{1-bq^m}\right).
\end{align*}
\end{proof}

\subsection{Proof of Entry 1:}
 To prove Entry $1$ (Equation \eqref{entry1}), we let $d\rightarrow 0$ in \eqref{CEM-main identity} which yields 
\begin{equation*}
 \sum_{n=1}^{\infty} \frac{\left( \frac{b}{a}\right)_n (-ac)^n q^{\frac{n(n-1)}{2}}}{(b)_n (cq)_n}
= \frac{(b-a)}{q} \sum_{n=1}^{\infty} \frac{(a)_{n-1} (cq)^n}{(b)_n}.
\end{equation*}
Multiply by $\frac{1-b}{c(a-b)}$ throughout to obtain
\begin{align*}
\sum_{n=1}^{\infty}\frac{(bq/a)_{n-1}(-ac)^{n-1}q^{n(n-1)/2}}{(bq)_{n-1}(cq)_n}=\sum_{n=1}^{\infty}\frac{(a)_{n-1}(cq)^{n-1}}{(bq)_{n-1}}\\
\Rightarrow \frac{1}{1-cq}\sum_{n=0}^{\infty}\frac{(bq/a)_n(-ac)^nq^{n(n+1)/2}}{(bq)_n(cq^2)_n}=\sum_{n=0}^{\infty}\frac{(a)_n(cq)^n}{(bq)_n}.
\end{align*}
Substituting $b=1$ and invoking the well-known $q$-binomial theorem \eqref{q-binomial} on the right side, we have
\begin{align*}
\frac{1}{1-cq}\sum_{n=0}^{\infty}\frac{(q/a)_n(-ac)^nq^{n(n+1)/2}}{(q)_n(cq^2)_n}=\sum_{n=0}^{\infty}\frac{(a)_n(cq)^n}{(q)_n}=\frac{(acq)_\infty}{(cq)_\infty}.
\end{align*}
Now putting $c = \frac{b}{q}$ and then canceling the factor $\frac{1}{1-b}$ on both the sides, we arrive at
\begin{align*}
\sum_{n=0}^{\infty}\frac{(q/a)_n(-ab/q)^nq^{n(n+1)/2}}{(q)_n(bq)_n}=\frac{(ab)_\infty}{(bq)_\infty}.
\end{align*}
Finally we replace $a$ by $\frac{-aq}{b}$ to attain the desired result, namely, Ramanujan's Entry $1$,
Equation \eqref{entry1}.

\subsection{Alternate proof of Entry 1} 
To obtain an alternate proof, we shall substitute $z=1$ in \eqref{2-var_gen_AGL} and then replace $d$ by $-a$ and $c$ by $b$ to arrive at 
\begin{align*}
\sum_{n=1}^{\infty}\frac{(-b/a)_n a^n q^{n(n+1)/2}}{(q)_n(bq)_n}&=\frac{(b+a)}{b}\sum_{n=1}^{\infty}\frac{(-aq/b)_{n-1}(bq)^n}{(q)_n}. \\ 
&=\sum_{n=1}^{\infty}\frac{(-a/b)_n(bq)^n}{(q)_n}.
\end{align*}
Upon adding $1$ on both sides and using \eqref{q-binomial} on the right side, we end up with \eqref{entry1}.

\subsection{Proof of Entry 2} We start by differentiating \eqref{d=0analogue_THM2.2_DM} with respect to $z$, 
\begin{equation}\label{DIFFwrt_z}
\sum_{n=1}^{\infty} \frac{z^{n-1}c^n q^{n^2}}{(zq)_n(cq)_n}\left\{n+ \sum_{r=1}^{n} 
\frac{zq^r}{1-zq^r}\right\}=\sum_{n=1}^{\infty} \frac{(cq)^n}{(zq)_n} \left\{1+\sum_{r=1}^{n} \frac{zq^r}{1-zq^r}\right\}. 
\end{equation}

Set $z=1$ above and separating the terms into two sums on both the sides, we get 
\begin{equation}\label{DIFFwrt_z_z=1}
\sum_{n=1}^{\infty} \frac{nc^n q^{n^2}}{(q)_n(cq)_n}+\sum_{n=1}^{\infty} \frac{c^n q^{n^2}}{(q)_n (cq)_n} \sum_{r=1}^{n} 
\frac{q^r}{1-q^r} =\sum_{n=1}^{\infty} \frac{(cq)^n}{(q)_n} + \sum_{n=1}^{\infty} \frac{(cq)^n}{(q)_n}
\sum_{r=1}^{n} \frac{q^r}{1-q^r}.
\end{equation}
For convenience, we consider each of the sides one by one in the above equation, starting with the left hand side. For the inner sum of the second term,
we employ \eqref{vanHamme} to obtain
\begin{align*}
&\sum_{n=1}^{\infty} \frac{nc^n q^{n^2}}{(q)_n(cq)_n}+\sum_{n=1}^{\infty} \frac{c^n q^{n^2}}{(q)_n (cq)_n}  \sum_{r=1}^{n} \frac{(q)_n}{(q)_{n-r} (q)_r} \frac{(-1)^{r-1} q^{\frac{r(r+1)}{2}}}{1-q^r} \\
& =\sum_{n=1}^{\infty} \frac{nc^n q^{n^2}}{(q)_n(cq)_n}+\sum_{r=1}^{\infty} \frac{(-1)^{r-1} q^{\frac{r(r+1)}{2}}}{(q)_r (1-q^r)}
\sum_{n=r}^{\infty} \frac{c^n q^{n^2}}{(cq)_n (q)_{n-r}} \\
&= \sum_{n=1}^{\infty} \frac{nc^n q^{n^2}}{(q)_n(cq)_n}+\sum_{r=1}^{\infty} \frac{(-1)^{r-1} q^{\frac{r(r+1)}{2}}}{(q)_r (1-q^r)}
\sum_{t=0}^{\infty} \frac{c^{r+t} q^{(r+t)^2}}{(cq)_{r+t} (q)_t} \\
& =\sum_{n=1}^{\infty} \frac{nc^n q^{n^2}}{(q)_n(cq)_n}+\sum_{r=1}^{\infty} \frac{(-1)^{r-1} c^r 
q^{r^2 + \frac{r(r+1)}{2}}}{(q)_r (1-q^r)} \sum_{t=0}^{\infty} 
\frac{c^t q^{t(t+2r)}}{(cq)_{r+t} (q)_t}.
\end{align*}
Using $(cq)_{r+t} = (cq)_r (cq^{r+1})_t$, we can write the previous line as
\begin{align}
&\sum_{n=1}^{\infty} \frac{nc^n q^{n^2}}{(q)_n(cq)_n}+ \sum_{r=1}^{\infty} \frac{(-1)^{r-1} c^r 
q^{r^2 + \frac{r(r+1)}{2}}}{(q)_r (cq)_r (1-q^r)} \sum_{t=0}^{\infty} 
\frac{c^t q^{t(t+2r)}}{(cq^{r+1})_{t} (q)_t} \nonumber \\
&= \sum_{n=1}^{\infty} \frac{nc^n q^{n^2}}{(q)_n(cq)_n}+\sum_{r=1}^{\infty} \frac{(-1)^{r-1} c^r 
q^{r^2 + \frac{r(r+1)}{2}}}{(q)_r (cq)_r (1-q^r)}\sum_{t=0}^{\infty} \frac{q^{t^2} (cq^{2r})^t}{(cq^{r+1})_t (q)_t}. \label{simplpostvH}
\end{align}
We now express the inner sum inside the second summation in a suitable form so as to apply one of 
Heine's transformations. We first note the following series of steps:
\begin{align}
   _2 \phi_1 \left[ \frac{q}{f}, \ \frac{q}{g}, \ h; \ q; \ fgw \right] &= 
  \sum_{t=0}^{\infty} \frac{\left(\displaystyle\frac{q}{f}\right)_t 
  \left(\displaystyle\frac{q}{g}\right)_t }{(h)_t (q)_t}(fgw)^t \label{applying_Heine} \\
  &= \sum_{t=0}^{\infty} \frac{q^{t^2 + t}}{(h)_t (q)_t}w^t \nonumber,
\end{align} 
upon letting $f$ and $g$ tend to zero. We now make the substitutions 
$w=cq^{2r-1}, \ h=cq^{r+1}$ in the last sum above to get
\begin{align*}
 \sum_{t=0}^{\infty} \frac{q^{t^2} (cq^{2r})^t}{(cq^{r+1})_t (q)_t},
\end{align*}
which is precisely the inner sum in \eqref{simplpostvH}. Applying Heine's transformation \eqref{heine}
to \eqref{applying_Heine}, we obtain
\begin{equation*}
 \sum_{t=0}^{\infty} \frac{\left(\displaystyle\frac{q}{f}\right)_t 
 \left(\displaystyle\frac{q}{g}\right)_t}{(h)_t (q)_t}(fgw)^t = 
 \frac{\left( \displaystyle\frac{gh}{q}\right)_{\infty} (fqw)_{\infty}}{(h)_{\infty} (fgw)_{\infty}}
 \sum_{t=0}^{\infty} \frac{ \left( \displaystyle\frac{q^2 w}{h}\right)_t 
 \left( \displaystyle\frac{q}{g}\right)_t}{(fqw)_t (q)_t} \left( \frac{gh}{q}\right)^t.
 \end{equation*}
As we had done with the left side here, we let $f, g \rightarrow 0$ and then do 
the substitutions $w=cq^{2r-1}, \ h=cq^{r+1}$ on the right hand side, to get the identity
\begin{equation}\label{madhya}
 \sum_{t=0}^{\infty} \frac{q^{t^2} (cq^{2r})^t}{(cq^{r+1})_t (q)_t} = 
\frac{1}{(cq^{r+1})_{\infty}}\sum_{t=0}^{\infty}\frac{(q^r)_t (-c)^t q^{rt+\frac{t(t+1)}{2}}}{(q)_t}.
\end{equation}
Utilizing \eqref{madhya} in \eqref{simplpostvH}, we obtain
\begin{align*}
&\sum_{n=1}^{\infty} \frac{nc^n q^{n^2}}{(q)_n(cq)_n}+\sum_{r=1}^{\infty} \frac{(-1)^{r-1} c^r 
q^{r^2 + \frac{r(r+1)}{2}}}{(q)_r (cq)_r (1-q^r)}\frac{1}{(cq^{r+1})_{\infty}}
\sum_{t=0}^{\infty}\frac{(q^r)_t (-c)^t q^{rt+\frac{t(t+1)}{2}}}{(q)_t}\\
&=\sum_{n=1}^{\infty} \frac{nc^n q^{n^2}}{(q)_n(cq)_n}+\frac{1}{(cq)_{\infty}}\sum_{r=1}^{\infty} \frac{(-1)^{r-1} c^r q^{r^2 + \frac{r(r+1)}{2}}}{(q)_r (1-q^r)}\sum_{t=0}^{\infty}\left[\begin{matrix} t+r-1 \\ t \end{matrix}\right](-c)^t q^{rt+\frac{t(t+1)}{2}}
\\&=\sum_{n=1}^{\infty} \frac{nc^n q^{n^2}}{(q)_n(cq)_n}-\frac{1}{(cq)_{\infty}}\sum_{r=1}^{\infty} \frac{q^{r^2}}{(q)_r (1-q^r)}\sum_{t=0}^{\infty}\left[\begin{matrix} t+r-1 \\ t \end{matrix}\right](-c)^{t+r}q^{\frac{(t+r)(t+r+1)}{2}}.
\end{align*}
Substituting $t+r=s$ in the above summation, we have
\begin{align}
&\sum_{n=1}^{\infty} \frac{nc^n q^{n^2}}{(q)_n(cq)_n}-\frac{1}{(cq)_{\infty}}\sum_{r=1}^{\infty} \frac{ q^{r^2}}{(q)_r (1-q^r)}\sum_{s=r}^{\infty}\left[\begin{matrix} s-1 \\ r-1 \end{matrix}\right](-c)^sq^{\frac{s(s+1)}{2}}\nonumber
\\&=\sum_{n=1}^{\infty} \frac{nc^n q^{n^2}}{(q)_n(cq)_n}-\frac{1}{(cq)_{\infty}}\sum_{s=1}^{\infty}\frac{(q)_{s-1}(-c)^sq^{\frac{s(s+1)}{2}}}{(q)_s}\sum_{r=1}^{s}\left[\begin{matrix} s \\ r \end{matrix}\right]\frac{q^{r^2}}{(q)_r}\nonumber
\\&=\sum_{n=1}^{\infty} \frac{nc^n q^{n^2}}{(q)_n(cq)_n}-\frac{1}{(cq)_{\infty}}\sum_{s=1}^{\infty}\frac{(-c)^sq^{\frac{s(s+1)}{2}}}{1-q^s}\left(\frac{1}{(q)_s}-1\right)\label{Simplified_LHS_DIFFwrt_z_z=1},
\end{align}
the simplification in the inner sum resulting from \eqref{andrews_z=1}. So the left side of \eqref{DIFFwrt_z_z=1} now takes the form of Equation 
\eqref{Simplified_LHS_DIFFwrt_z_z=1}. Coming to the right side of \eqref{DIFFwrt_z_z=1}, we begin by applying van Hamme's identity \eqref{vanHamme} 
for the inner sum of the second term:
\begin{align*}
 &\sum_{n=1}^{\infty} \frac{(cq)^n}{(q)_n} + \sum_{n=1}^{\infty} \frac{(cq)^n}{(q)_n}
\sum_{r=1}^{n} \frac{q^r}{1-q^r} \\
&= \left\{ \frac{1}{(cq)_{\infty}} - 1\right\} + \sum_{n=1}^{\infty}\frac{(cq)^n}{(q)_n}
\sum_{r=1}^{n} \left[\begin{matrix} n \\ r \end{matrix}\right] \frac{(-1)^{r-1} q^{\frac{r(r+1)}{2}}}{1-q^r},
\end{align*}
the first expression in the parentheses resulting from an application of \eqref{q-binomial}. Proceeding further, we get
\begin{align}
&\left\{ \frac{1}{(cq)_{\infty}} - 1\right\} + \sum_{r=1}^{\infty} \frac{(-1)^{r-1} q^{\frac{r(r+1)}{2}}}{(q)_r (1-q^r)}
\sum_{n=r}^{\infty} \frac{c^n q^n}{(q)_{n-r}} \nonumber \\
&= \left\{ \frac{1}{(cq)_{\infty}} - 1\right\} + \sum_{r=1}^{\infty} \frac{(-1)^{r-1} 
(cq)^r q^{\frac{r(r+1)}{2}}}{(q)_r (1-q^r)} \sum_{s=0}^{\infty} \frac{(cq)^s}{(q)_s}\nonumber \\
&=\left\{ \frac{1}{(cq)_{\infty}} - 1\right\} +\frac{1}{(cq)_{\infty}} \sum_{n=1}^{\infty} \frac{(-1)^{n-1} 
(cq)^n q^{\frac{n(n+1)}{2}}}{(q)_n (1-q^n)}, \label{Simplified_RHS_DIFFwrt_z_z=1}  
\end{align}
by invoking \eqref{q-binomial} in the innermost sum. 

Equating the simplified versions of both sides of \eqref{DIFFwrt_z_z=1}, namely, \eqref{Simplified_LHS_DIFFwrt_z_z=1} and 
\eqref{Simplified_RHS_DIFFwrt_z_z=1} together, one arrives at
\begin{align*}
\sum_{n=1}^{\infty} \frac{nc^nq^{n^2} }{(q)_n(cq)_n}-\frac{1}{(cq)_{\infty}}\sum_{n=1}^{\infty}\frac{(-c)^nq^{\frac{n(n+1)}{2}}}{(q)_n(1-q^n)}+  \frac{1}{(cq)_{\infty}}\sum_{n=1}^{\infty}\frac{(-c)^nq^{\frac{n(n+1)}{2}}}{1-q^n}\\
=\left\{ \frac{1}{(cq)_{\infty}} - 1\right\}+\frac{1}{(cq)_{\infty}} \sum_{n=1}^{\infty} \frac{(-1)^{n-1} 
(cq)^n q^{\frac{n(n+1)}{2}}}{(q)_n (1-q^n)}. 
\end{align*}
Now a nice simplification happens if we combine the second term of the left side with the second term of the right side giving, after rearrangement,
\begin{align*}
\sum_{n=1}^{\infty} \frac{nc^nq^{n^2} }{(q)_n(cq)_n} - \frac{1}{(cq)_{\infty}}\sum_{n=1}^{\infty}\frac{(-c)^nq^{\frac{n(n+1)}{2}}}{(q)_n(1-q^n)}(1-q^n) 
=\left\{ \frac{1}{(cq)_{\infty}} - 1\right\} +
\frac{1}{(cq)_{\infty}}\sum_{n=1}^{\infty}\frac{(-1)^{n-1} c^nq^{\frac{n(n+1)}{2}}}{1-q^n}.
\end{align*}
Upon canceling the $(1-q^n)$ factor in the second term of the left side and utilizing \eqref{alternate form of q-binomial} yields 
\begin{align*}
\sum_{n=1}^{\infty} \frac{nc^nq^{n^2} }{(q)_n(cq)_n} - \frac{1}{(cq)_{\infty}}\Big\{(cq)_{\infty}-1\Big\}
=\left\{ \frac{1}{(cq)_{\infty}} - 1\right\} + \frac{1}{(cq)_{\infty}}\sum_{n=1}^{\infty}\frac{(-1)^{n-1} c^n q^{\frac{n(n+1)}{2}}}{1-q^n}.
\end{align*}
Canceling $\frac{1}{(cq)_{\infty}} - 1$ from both sides, the surviving terms leave us with Entry 2
of Ramanujan.

\subsection{Alternate proof of Entry 2} An alternate proof of Ramanujan's Entry $2$ can also be obtained by 
differentiating the identity \eqref{d=0analogue_THM2.2_DM} with respect to $c$.
In the previous proof, we have differentiated \eqref{d=0analogue_THM2.2_DM} with respect to $z$ and obtained \eqref{DIFFwrt_z}. 
We had also noted earlier that the left side of \eqref{d=0analogue_THM2.2_DM} is
symmetric in $z$ and $c$. So, even upon differentiation with respect to $c$ and letting 
$c \rightarrow 1$, the left side of \eqref{d=0analogue_THM2.2_DM} would give us the same expression as in the previous proof, namely, 
the left side of \eqref{DIFFwrt_z_z=1} (or the simplified \eqref{Simplified_LHS_DIFFwrt_z_z=1}), with $z$ replacing $c$ here. That is,
\begin{equation}\label{Simplified_LHS_DIFFwrt_c=1}
\sum_{n=1}^{\infty} \frac{nz^n q^{n^2}}{(q)_n(zq)_n}-\frac{1}{(zq)_{\infty}}\sum_{s=1}^{\infty}\frac{(-z)^sq^{\frac{s(s+1)}{2}}}{1-q^s}
\left(\frac{1}{(q)_s}-1\right). 
\end{equation}
So it would be interesting to differentiate the right side of \eqref{d=0analogue_THM2.2_DM} too, with respect to $c$. 
We do this and then set $c=1$ to get 
\begin{equation}\label{Simplified_RHS_DIFFwrt_c_c=1}
 \sum_{n=1}^{\infty} \frac{nzq^n}{(zq)_n}.
\end{equation}
Equating \eqref{Simplified_LHS_DIFFwrt_c=1} and \eqref{Simplified_RHS_DIFFwrt_c_c=1}, we get after a little rearrangement,
\begin{equation}\label{Simplified LHS=RHS of DIFFwrt_c_c=1}
     \frac{-1}{(zq)_{\infty}}\sum_{n=1}^{\infty}\frac{(-z)^nq^{\frac{n(n+1)}{2}}}{(q)_n(1-q^n)}+  \frac{1}{(zq)_{\infty}}\sum_{n=1}^{\infty}\frac{(-z)^nq^{\frac{n(n+1)}{2}}}{1-q^n} = \sum_{n=1}^{\infty} \frac{nzq^n }{(zq)_n} 
- \sum_{n=1}^{\infty} \frac{nz^nq^{n^2} }{(q)_n(zq)_n}.
\end{equation}
Note that our aim is to prove Ramanujan's Entry 2, namely, Equation \ref{entry2},
\begin{align*}
\frac{1}{(zq)_{\infty}}\sum_{n=1}^{\infty}\frac{(-1)^{n-1}z^nq^{\frac{n(n+1)}{2}}}{1-q^n} = \sum_{n=1}^{\infty} \frac{nz^nq^{n^2} }{(q)_n(zq)_n}.
\end{align*}
So, in Equation \eqref{Simplified LHS=RHS of DIFFwrt_c_c=1}, if we are able to show that
\begin{align}\label{Gerneralization of uchimura identity form 2}
\frac{-1}{(zq)_{\infty}}\sum_{n=1}^{\infty}\frac{(-z)^nq^{\frac{n(n+1)}{2}}}{(q)_n(1-q^n)}=\sum_{n=1}^{\infty} \frac{nzq^n }{(zq)_n},
\end{align}
then we will be through.
To do this, we first multiply both sides of 
\eqref{Gerneralization of uchimura identity form 2} by $\displaystyle\frac{(zq)_{\infty}}{z}$, so as 
to put the equation in an equivalent form which is neater,

\begin{equation}\label{one variable Gerneralization of uchimura identity}
\sum_{n=1}^{\infty}\frac{(-z)^{n-1}q^{\frac{n(n+1)}{2}}}{(q)_n(1-q^n)}=\sum_{n=1}^{\infty} nq^n(zq^{n+1})_{\infty}.
\end{equation}
We now prove \eqref{one variable Gerneralization of uchimura identity} by showing that their partition-theoretic interpretations are the same. 
The left side of \eqref{one variable Gerneralization of uchimura identity} can be written as follows,
\begin{align*}
\sum_{n=1}^{\infty}\frac{(-z)^{n-1}q^{\frac{n(n+1)}{2}}}{(q)_n(1-q^n)}=\sum_{n=1}^{\infty} (-z)^{n-1}
\frac{q^1}{1-q^1} \cdot \frac{q^2}{1-q^2} \cdots \frac{q^{n-1}}{1-q^{n-1}}\cdot\frac{q^n}{(1-q^n)^2}.
\end{align*}
For $1\leq r\leq n-1,
\frac{q^r}{1-q^r}=\sum_{k_r=1}^{\infty}q^{k_r \cdot r}$ and $\frac{q^n}{(1-q^n)^2}=\sum_{k_n=1}^{\infty} k_n q^{k_n \cdot n}$. 
Therefore the left hand side of \eqref{one variable Gerneralization of uchimura identity} 
becomes 
\begin{align*}
\sum_{m=1}^{\infty}\left(\sum_{\pi\in\mathcal{P}^*(m)}(-z)^{\ell(\pi)-1}\nu(\ell(\pi))\right)q^m,
\end{align*}
which upon conjugation yields
\begin{align*}
\sum_{m=1}^{\infty}\left(\sum_{\pi\in\mathcal{D}(m)}(-z)^{\#(\pi)-1}s(\pi)\right)q^m.
\end{align*}
Now, the generating function for this weighted partition over distinct part partitions can be written as 
\begin{align*}
\sum_{n=1}^{\infty} nq^n(1-zq^{n+1})(1-zq^{n+2})\cdots=\sum_{n=1}^{\infty}nq^n(zq^{n+1})_{\infty}.
\end{align*}
Hence \eqref{one variable Gerneralization of uchimura identity} holds and we have proved Entry $2$. 
\begin{remark}
In the course of giving this alternate proof, we obtained Equation 
\eqref{one variable Gerneralization of uchimura identity}, which is a one-variable
generalization of the first equality in \eqref{Uchimura}. It would be interesting to find 
a direct analytic proof of \eqref{one variable Gerneralization of uchimura identity}, and also
generalize the second equality in \eqref{Uchimura}.
\end{remark}

\section{Proof of Bressoud-Subbarao's partition identity}\label{BS_generalized}
The identities of Ramanujan continue to unearth many mathematical treasures hidden under them. 
A glimpse of one such instance may be seen in the present section wherein we found that Bressoud-Subbarao's identity \eqref{BS-general} 
is nothing more than an application of successive differentiation to the partition interpretation of Entry $4$ (Equation \eqref{Part_Imp_Entry4} below). 
Indeed, using a couple of simple operators, we prove a generalization of Bressoud-Subbarao's partition identity, namely, Theorem \ref{BS-general one var}.
\begin{definition}
Let $f(a)$ be a polynomial in `$a$'. We define the operators `$D$' and `$I$' respectively by 
$D[f(a)]:=a \displaystyle \frac{\partial}{\partial a}\{f(a)\}$, and
$I[f(a)]:= \displaystyle \int_{0}^{a}\frac{f(t)}{t}dt$.
\end{definition}
We begin with a few crucial lemmas. 
\begin{lemma} Entry $4$ has an appealing weighted partition implication, namely, 
\begin{equation}\label{Part_Imp_Entry4}
\sum_{\pi\in \mathcal{D}(n)}(-1)^{\#(\pi)-1}a^{\ell(\pi)-s(\pi)+1}\left(\frac{a^{s(\pi)}-1}{a-1}\right)=\sum_{d|n}a^d.
\end{equation} 
\end{lemma}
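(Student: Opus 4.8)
The plan is to deduce \eqref{Part_Imp_Entry4} directly from Entry $4$, Equation \eqref{entry4}, by expanding each side as a power series in $q$ and matching the coefficient of $q^n$.

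First I would expand the left-hand side of \eqref{entry4} combinatorially. For a fixed summation index $n$, I would write the $n$-th term as
\begin{equation*}
\frac{(-1)^{n-1}a^nq^{n(n+1)/2}}{(1-q^n)(aq)_n}=(-1)^{n-1}a^n\cdot\frac{q}{1-aq}\cdot\frac{q^2}{1-aq^2}\cdots\frac{q^{n-1}}{1-aq^{n-1}}\cdot\frac{q^n}{(1-q^n)(1-aq^n)},
\end{equation*}
and then expand each factor geometrically, using $\dfrac{q^r}{1-aq^r}=\sum_{m_r\ge1}a^{m_r-1}q^{m_r r}$ for $1\le r\le n-1$ and $\dfrac{q^n}{(1-q^n)(1-aq^n)}=\sum_{m_n\ge1}\bigl(1+a+\cdots+a^{m_n-1}\bigr)q^{m_n n}$. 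After interchanging the order of summation, the coefficient of $q^m$ on the left of \eqref{entry4} becomes a sum over all partitions $\pi$ whose set of distinct parts is exactly $\{1,2,\dots,\ell(\pi)\}$ — that is, over $\pi\in\mathcal{P}^{*}(m)$, with $n=\ell(\pi)$ and with each part $r$ occurring $\nu(r)=m_r$ times — carrying the weight
\begin{equation*}
(-1)^{\ell(\pi)-1}\,a^{\#(\pi)-\nu(\ell(\pi))+1}\bigl(1+a+\cdots+a^{\nu(\ell(\pi))-1}\bigr),
\end{equation*}
where the exponent of $a$ is computed from $\ell(\pi)+\sum_{r=1}^{\ell(\pi)-1}(\nu(r)-1)=\#(\pi)-\nu(\ell(\pi))+1$.

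Next I would pass from $\mathcal{P}^{*}(m)$ to $\mathcal{D}(m)$ by conjugation. If $\pi\in\mathcal{P}^{*}(m)$ has $\ell(\pi)=L$ and multiplicities $\nu(1),\dots,\nu(L)\ge1$, then its conjugate $\pi'$ has exactly the $L$ distinct parts $\nu(k)+\nu(k+1)+\cdots+\nu(L)$ for $k=1,\dots,L$; hence $\pi'\in\mathcal{D}(m)$ and conjugation is a bijection $\mathcal{P}^{*}(m)\leftrightarrow\mathcal{D}(m)$ under which $\#(\pi')=L=\ell(\pi)$, $\ell(\pi')=\#(\pi)$ and $s(\pi')=\nu(L)=\nu(\ell(\pi))$. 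Rewriting the weight above in terms of $\sigma=\pi'$ converts it to $(-1)^{\#(\sigma)-1}a^{\ell(\sigma)-s(\sigma)+1}\dfrac{a^{s(\sigma)}-1}{a-1}$, so that the left side of \eqref{entry4} equals $\sum_{m\ge1}\bigl(\sum_{\sigma\in\mathcal{D}(m)}(-1)^{\#(\sigma)-1}a^{\ell(\sigma)-s(\sigma)+1}\tfrac{a^{s(\sigma)}-1}{a-1}\bigr)q^m$. On the right side of \eqref{entry4}, $\sum_{n\ge1}\dfrac{a^nq^n}{1-q^n}=\sum_{n\ge1}\sum_{k\ge1}a^nq^{nk}=\sum_{m\ge1}\bigl(\sum_{d\mid m}a^d\bigr)q^m$. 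Since Entry $4$ identifies these two generating functions, comparing coefficients of $q^n$ yields \eqref{Part_Imp_Entry4}.

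The delicate point is the expansion in the first step, and in particular the asymmetric role of the largest part: the factor attached to $\ell(\pi)=n$ is $\dfrac{q^n}{(1-q^n)(1-aq^n)}$, not $\dfrac{q^n}{1-aq^n}$, and it is exactly this factor that produces the geometric sum $1+a+\cdots+a^{\nu(\ell(\pi))-1}$, equivalently the factor $\dfrac{a^{s(\pi)}-1}{a-1}$ appearing in \eqref{Part_Imp_Entry4} after conjugation. Apart from this, one only needs to check that the statistics $\ell(\pi)$, $\#(\pi)$ and the multiplicity of the largest part of $\pi$ go over, under conjugation, to $\#$, $\ell$ and $s$ respectively, and to justify the interchanges of summation — both routine.
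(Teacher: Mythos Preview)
Your argument is correct. The expansion of each factor, the bookkeeping of the exponent of $a$ as $\ell(\pi)+\sum_{r=1}^{\ell(\pi)-1}(\nu(r)-1)=\#(\pi)-\nu(\ell(\pi))+1$, the treatment of the distinguished factor $\dfrac{q^n}{(1-q^n)(1-aq^n)}=\sum_{m_n\ge1}(1+a+\cdots+a^{m_n-1})q^{m_n n}$, and the passage from $\mathcal{P}^{*}(m)$ to $\mathcal{D}(m)$ via conjugation are all verified without issue.

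As for comparison with the paper: the paper does not supply its own proof of this lemma at all. It simply remarks that \eqref{Part_Imp_Entry4} is Corollary~2.6 of Dixit and Maji~\cite{DM} and proceeds to use it. Your derivation directly from Entry~4 by expanding both sides as $q$-series, interpreting the left side over $\mathcal{P}^{*}(m)$, and conjugating to $\mathcal{D}(m)$ is exactly the natural route one would expect such a proof to take (and is presumably close in spirit to what appears in \cite{DM}), so you have in effect filled in what the present paper leaves as a citation.
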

We would like to remark here that \eqref{Part_Imp_Entry4} is Corollary $2.6$ of Dixit and Maji, see \cite{DM}.
\begin{lemma}\label{D^m lemma for BS-general}
 Let $k \in \mathbb{N}, \ r \in \mathbb{N} \cup \{0\}$ and let $ F_0(a):=a^r(a+a^2+\cdots+a^k)$. Then, for each $m \in \mathbb{N}$,
 \begin{equation}\label{Ind_F_0(a)}
D^m \left[F_0(a)\right] = (r+1)^m a^{r+1}+(r+2)^m a^{r+2}+\cdots+(r+k)^m a^{r+k} = 
\sum_{j=1}^{k} (r+j)^m a^{r+j}. 
\end{equation}
\end{lemma}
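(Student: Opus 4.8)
The plan is to prove this by a short induction on $m$, the whole point being the elementary fact that the operator $D = a\frac{\partial}{\partial a}$ acts diagonally on monomials. First I would rewrite $F_0(a)$ in the transparent form $F_0(a) = a^{r+1} + a^{r+2} + \cdots + a^{r+k} = \sum_{j=1}^{k} a^{r+j}$, so that by the linearity of $D$ (and hence of $D^m$) on polynomials it suffices to understand the action of $D^m$ on a single monomial $a^N$.

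For the base case $m=1$, one simply computes $D[a^N] = a\frac{\partial}{\partial a}\big[a^N\big] = N a^N$; that is, every monomial $a^N$ is an eigenvector of $D$ with eigenvalue $N$. Summing this over $N = r+1, r+2, \ldots, r+k$ gives $D[F_0(a)] = \sum_{j=1}^{k}(r+j)\,a^{r+j}$, which is \eqref{Ind_F_0(a)} for $m=1$. For the inductive step, assuming $D^m[F_0(a)] = \sum_{j=1}^{k}(r+j)^m a^{r+j}$, I would apply $D$ once more, use linearity and the eigenvalue relation $D[a^{r+j}] = (r+j)\,a^{r+j}$ term by term, and obtain $D^{m+1}[F_0(a)] = \sum_{j=1}^{k}(r+j)^{m}(r+j)\,a^{r+j} = \sum_{j=1}^{k}(r+j)^{m+1}a^{r+j}$, which closes the induction.

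There is no real obstacle here; this is a routine computation, and the only thing worth stating cleanly beforehand is the closed form $D^m[a^N] = N^m a^N$ for all $m \in \mathbb{N}$, obtained by iterating the eigenvalue relation, so that the passage from monomials to the finite sum defining $F_0(a)$ is completely transparent. One could in fact bypass the explicit induction and write the result in a single line from $D^m[a^N] = N^m a^N$, but recording the induction makes the bookkeeping for the repeated applications of $D$ (and later of $I$) in Section \ref{BS_generalized} more readable.
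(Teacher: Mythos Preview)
Your proof is correct and follows essentially the same route as the paper: both rewrite $F_0(a)=\sum_{j=1}^{k}a^{r+j}$, verify the base case $m=1$ via $D[a^{r+j}]=(r+j)a^{r+j}$, and close the induction by applying $D$ once more to the assumed identity. Your framing in terms of the eigenvalue relation $D[a^N]=Na^N$ (and the resulting one-line shortcut $D^m[a^N]=N^m a^N$) is a nice touch but not a substantive departure.
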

\begin{proof}
We have
\begin{align*}
D^1 \left[F_0(a)\right] = D \left[F_0(a)\right] &= a\frac{\partial }{\partial a}\left\{F_0(a)\right\}=
a\frac{\partial }{\partial a}\left\{a^r(a+a^2+ \cdots +a^k)\right\}\\
&= a \sum_{j=1}^{k} \frac{\partial }{\partial a} \{a^{r+j}\}
=\sum_{j=1}^{k} (r+j) a^{r+j},
\end{align*}
so that \eqref{Ind_F_0(a)} holds for $m=1$. 
Suppose that Equation \eqref{Ind_F_0(a)} holds for some integer $p \geq 1$, that is,
\begin{equation*}\label{Induction_p}
D^p \left[F_0(a)\right] = \sum_{j=1}^{k} (r+j)^p a^{r+j}.
\end{equation*}
Now, 
\begin{align*}
D^{p+1} \left[F_0(a)\right]&=D\left\{D^p \left[F_0(a)\right]\right\} 
=a\frac{\partial }{\partial a}\left\{\sum_{j=1}^{k} (r+j)^p a^{r+j}\right\}\\
&= a\sum_{j=1}^{k} \frac{\partial }{\partial a}\left\{(r+j)^p a^{r+j}\right\}
= \sum_{j=1}^{k} (r+j)^{p+1} a^{r+j}.
\end{align*}
This finishes the proof of the lemma.
\end{proof}

Similarly, we have the following result for the operator $I$.
\begin{lemma}\label{I^m lemma for BS-general}
Let $k, r \in \mathbb{N} \cup \{0\}$ and let $ F_0(a):=a^{r}(a+a^2+\cdots+a^k)$. Given any $m \in \mathbb{N}$, 
we have
\begin{equation}\label{Ioperator_Ind_F_0(a)}
I^m \left[F_0(a)\right] = \frac{a^{r+1}}{(r+1)^m} + \frac{a^{r+2}}{(r+2)^m} +
\cdots+\frac{a^{r+k}}{(r+k)^m} = \sum_{j=1}^{k} 
\frac{a^{r+j}}{(r+j)^m}. 
\end{equation}
\end{lemma}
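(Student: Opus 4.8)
The plan is to mirror the inductive argument used for Lemma~\ref{D^m lemma for BS-general}, with the antidifferentiation operator $I$ playing the role of the differentiation operator $D$. First I would settle the base case $m=1$. Writing $F_0(a)=a^{r}(a+a^2+\cdots+a^k)=\sum_{j=1}^{k}a^{r+j}$, we have $\frac{F_0(t)}{t}=\sum_{j=1}^{k}t^{r+j-1}$; since $j\ge 1$ forces $r+j\ge 1$, each exponent $r+j-1$ is a nonnegative integer, so $I[F_0(a)]=\int_0^a\frac{F_0(t)}{t}\,dt$ is an ordinary polynomial integral and term-by-term integration gives $I[F_0(a)]=\sum_{j=1}^{k}\frac{a^{r+j}}{r+j}$, which is precisely \eqref{Ioperator_Ind_F_0(a)} for $m=1$.

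For the inductive step, assume \eqref{Ioperator_Ind_F_0(a)} holds for some $p\ge 1$, that is, $I^{p}[F_0(a)]=\sum_{j=1}^{k}\frac{a^{r+j}}{(r+j)^{p}}$. Applying $I$ once more and pulling the coefficients $\frac{1}{(r+j)^{p}}$, which are constants with respect to the integration variable, outside the integral, one gets
\[
I^{p+1}[F_0(a)]=\int_0^a\frac{1}{t}\sum_{j=1}^{k}\frac{t^{r+j}}{(r+j)^{p}}\,dt=\sum_{j=1}^{k}\frac{1}{(r+j)^{p}}\int_0^a t^{r+j-1}\,dt=\sum_{j=1}^{k}\frac{a^{r+j}}{(r+j)^{p+1}},
\]
which is \eqref{Ioperator_Ind_F_0(a)} with $m=p+1$. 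This closes the induction and proves the lemma.

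I do not anticipate any substantial obstacle here; the only point worth a sentence of care is that the lower limit $0$ in the definition of $I$ causes no difficulty, precisely because dividing $F_0(t)$ by $t$ leaves only monomials of nonnegative degree, a consequence of $j\ge 1$, so every antiderivative used vanishes at $0$ and no logarithmic or singular boundary term appears. I would also note in passing that on the span of $\{a^{r+1},\dots,a^{r+k}\}$ the operator $I$ is a genuine right inverse of $D$, which is exactly why Lemma~\ref{I^m lemma for BS-general} is the natural companion to Lemma~\ref{D^m lemma for BS-general} and is what will let us handle, by iterating $I$ in place of $D$, the cases of Theorem~\ref{BS-general one var} in which the exponent $m$ is a negative integer.
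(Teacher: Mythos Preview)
Your proposal is correct and follows essentially the same inductive argument as the paper: establish the base case $m=1$ by term-by-term integration of $\sum_{j=1}^{k} t^{r+j-1}$, then apply $I$ once more to pass from $p$ to $p+1$. Your additional remarks about the nonnegativity of the exponents $r+j-1$ (so that no logarithmic term arises at the lower limit $0$) and about $I$ being a right inverse of $D$ on the relevant span are helpful clarifications that the paper leaves implicit, but the core argument is the same.
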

\begin{proof}
Firstly, consider
\begin{align*}
I^1 \left[F_0(a)\right] = I \left[F_0(a)\right] &= 
\int_{0}^{a}\frac{F_0(t)}{t}= \int_{0}^{a} t^r (1 + t + \cdots + t^{k-1})dt \\
&= \sum_{j=0}^{k-1} \int_{0}^{a} t^{r+j} dt =  \sum_{j=1}^{k} \frac{a^{r+j}}{r+j},
\end{align*}
which means that \eqref{Ioperator_Ind_F_0(a)} is true for $m=1$. 
We now assume that Equation \eqref{Ioperator_Ind_F_0(a)} holds for some integer $p \geq 1$, 
thus,
\begin{equation*}\label{I_Induction_p}
I^p \left[F_0(a)\right] = \sum_{j=1}^{k} \frac{a^{r+j}}{(r+j)^p}.
\end{equation*}
Now, 
\begin{align*}
I^{p+1} \left[F_0(a)\right]=I\left\{I^p \left[F_0(a)\right]\right\} 
&=\int_{0}^{a} \frac{I^p\left[F_0(t)\right]}{t} dt
= \int_{0}^{a} \sum_{j=1}^{k} \frac{t^{r+j-1}}{(r+j)^p}
= \sum_{j=1}^{k} \frac{a^{r+j}}{(r+j)^{p+1}},
\end{align*} 
and so we are done.
\end{proof}
We are now all set to prove a generalization of Bressoud-Subbarao's identity, namely, Equation
\eqref{One_Var_BS-general}.

\subsection{Proof of Theorem \ref{BS-general one var}}
\begin{proof} Applying the differential operator $D$ successively $m$ times on both sides of identity \eqref{Part_Imp_Entry4}, we find that
\begin{align}
\sum_{\pi\in \mathcal{D}(n)}(-1)^{\#(\pi)-1}D^m\left(a^{\ell(\pi)-s(\pi)+1}\left(\frac{a^{s(\pi)}-1}{a-1}\right)\right) &=\sum_{d | n}D^m\left(a^d\right) \nonumber\\
\Rightarrow \sum_{\pi\in \mathcal{D}(n)}(-1)^{\#(\pi)-1}D^m \left( a^{\ell(\pi) - s(\pi)}\left(a + a^2 + \cdots + a^{s(\pi)}\right) \right)
&=\sum_{d|n}D^m\left(a^d\right).
\label{m_times_diff of Part_Imp_Entry4}
\end{align}
Note that $\ell(\pi) - s(\pi) \in \mathbb{N} \cup \{ 0 \}$ and $s(\pi) \in \mathbb{N}$ for 
each $\pi \in \mathcal{D}(n)$. So we are in a position to apply 
Lemma \ref{D^m lemma for BS-general}
to each of the polynomials $F_0(a; \pi):= a^{\ell(\pi) - s(\pi)}(a + a^2 + \cdots + a^{s(\pi)})$ 
as $\pi$ ranges over the collection $\mathcal{D}(n)$.
Also, for each $m \in \mathbb{N}$, we have  
\begin{align*}
D^m\left[a^d\right]=d^m a^d.
\end{align*} 
Thus the partition identity \eqref{m_times_diff of Part_Imp_Entry4} now becomes
\begin{align*}
\sum_{\pi\in \mathcal{D}(n)}(-1)^{\#(\pi)-1}D^m\left[F_0(a; \pi)\right]=\sum_{d  |  n}d^ma^d.
\end{align*}
Appealing to Lemma \ref{D^m lemma for BS-general}, we get
\begin{equation}\label{star}
\sum_{\pi\in \mathcal{D}(n)}(-1)^{\#(\pi)-1} \sum_{j=1}^{s(\pi)} (\ell(\pi)-s(\pi)+j)^m 
 a^{\ell(\pi)-s(\pi)+j}= \sum_{d  |  n}d^ma^d,
\end{equation}
which is nothing but identity \eqref{One_Var_BS-general} of Theorem \ref{BS-general one var} 
for $m \in \mathbb{N}$. Equation \eqref{Part_Imp_Entry4} is evidently the case $m=0$ of 
\eqref{One_Var_BS-general}. We are left with the case when $m$ is a negative integer. Suppose
$m=-u$, where $u$ is a positive integer. Here the 
operator $I$ plays an analogous role to that of $D$ in the above discussion. We now 
start by applying $I$ to \eqref{Part_Imp_Entry4}.
\begin{align}
\sum_{\pi\in \mathcal{D}(n)}(-1)^{\#(\pi)-1}I^u\left(a^{\ell(\pi)-s(\pi)+1}
\left(\frac{a^{s(\pi)}-1}{a-1}\right)\right) &=\sum_{d | n}I^u\left(a^d\right) \nonumber\\
\Rightarrow \sum_{\pi\in \mathcal{D}(n)}(-1)^{\#(\pi)-1}I^u \left( a^{\ell(\pi) - s(\pi)}
\left(a + a^2 + \cdots + a^{s(\pi)}\right) \right)
&=\sum_{d|n}I^u\left(a^d\right). \label{u_times_int of Part_Imp_Entry4}
\end{align}
We now apply Lemma \ref{I^m lemma for BS-general} to each of 
$F_0(a; \pi):= a^{\ell(\pi) - s(\pi)} \left(a + a^2 + \cdots + a^{s(\pi)}\right), \ 
\pi \in \mathcal{D}(n)$. Equation \eqref{u_times_int of Part_Imp_Entry4} now takes the form
\begin{align*}
\sum_{\pi\in \mathcal{D}(n)}(-1)^{\#(\pi)-1}I^u\left[F_0(a; \pi)\right]=
\sum_{\pi\in \mathcal{D}(n)}(-1)^{\#(\pi)-1}\sum_{j=1}^{s(\pi)} 
\frac{a^{\ell(\pi)-s(\pi)+j}}{(\ell(\pi)-s(\pi)+j)^u}=
\sum_{d|n}\frac{a^d}{d^u}.
\end{align*}
In other words, for negative integers $m$, we have
\begin{equation}\label{doublestar}
\sum_{\pi\in \mathcal{D}(n)}(-1)^{\#(\pi)-1}\sum_{j=1}^{s(\pi)} 
a^{\ell(\pi)-s(\pi)+j}(\ell(\pi)-s(\pi)+j)^m=
\sum_{d|n}a^d d^m.
\end{equation}
Combining \eqref{star} and \eqref{doublestar}, along with the case $m=0$, 
we complete the proof of Theorem \ref{BS-general one var}.
%
\end{proof}

\begin{remark}
It may seem artificial to differentiate the partition implication of a $q$-series identity rather than the identity itself (namely Entry $4$
in this case), which is the usual practice in the theory of $q$-series and partitions. But when we proceed to apply the operator $D$ 
successively $k$ times to Entry $4$, we arrive at
\begin{equation*}
\sum_{n=1}^{\infty}(-1)^{n-1} \frac{q^{\frac{n(n+1)}{2}}}{1-q^n}D^k 
\left( \frac{a^n}{(aq)_n} \right) = \sum_{n=1}^{\infty} \frac{n^k a^n q^n}{1-q^n}. 
\end{equation*}
In the above equation, we see that multiple differentiation on the left side
would become complicated owing to the presence of the $\frac{1}{(aq)_n}$ factor. 
This is mitigated by differentiating the partition implication instead, namely, Equation
\eqref{Part_Imp_Entry4}. This has an added advantage that, we could `reverse' the process and apply the integral operator $I$ too, thereby extending
Bressoud-Subbarao's identity to negative integers. To the best of our knowledge, this integral operator has not been applied to $q$-series identities directly.
\end{remark}

%

\section{Some Weighted Partition Identities}
On many occasions, it is seen that the inherent beauty of a $q$-series identity may not be 
completely realized until we throw some light on its
partition implications. This underlying motivation led us to investigate a few corollaries of our main results in search of more partition identities with some
nice weights. In this section, we derive some interesting partition-theoretic implications of Theorem \eqref{form 2 of the generalization}.
We let $d \rightarrow -1$ in Theorem \ref{form 2 of the generalization}, getting an identity which is closely associated to 
Dixit-Maji's identity \eqref{gen of Garvan},
\begin{equation}\label{WPI, d=-1}
\sum_{n=1}^{\infty} \frac{z^{n-1}(-cq)_{n-1} q^{\frac{n(n+1)}{2}}}{(zq)_n (cq)_n} =  \sum_{n=1}^{\infty} \frac{\left(\frac{-zq}{c}\right)_{n-1} c^{n-1}q^n}{(zq)_n}. 
\end{equation}
We now study the special cases of \eqref{WPI, d=-1}. One motivation is, in \cite{DM}, the authors considered the special cases
of their identity \cite[Equation (2.3)]{DM} and were fruitful in their endeavour. Since their identity is a special case of 
\eqref{form 2 of the generalization} with $d=1$, richer partition-theoretic information may still be waiting to be unlocked in the other cases of
\eqref{form 2 of the generalization}. 


\begin{proof}[Theorem \textup{\ref{two divisor functions}}][]
We begin by putting $c=z=-1$ in \eqref{WPI, d=-1}, which gives us
\begin{equation}\label{d=c=z=-1}
\sum_{n=1}^{\infty} \frac{(-1)^{n-1} (q)_{n-1} q^{\frac{n(n+1)}{2}}}{(-q)_n^2} = \sum_{n=1}^{\infty} \frac{(-1)^{n-1} q^{n}}{1+q^n}.
\end{equation}
Start with the right side above, writing it as 
\begin{equation}\label{rhsinter1}
 \sum_{n=1}^{\infty} (-1)^n \frac{-q^n}{1-(-q^n)}
= \sum_{n=1}^{\infty} (-1)^n \sum_{m=1}^{\infty} (-1)^m q^{m \cdot n} 
 =\sum_{k=1}^{\infty} \left( \sum_{d | k} (-1)^{d + \frac{k}{d}} \right) q^k.
\end{equation}
By referring to the sequence $A228441$ on the OEIS webpage \cite{sloane}, \cite[p.4, p.8]{glaisher}, we see that
$\sum_{d | k} (-1)^{d + \frac{k}{d}}$ is simply $d(k) - 4 d_{2, 4}(k)$. Coming to the left side of \eqref{d=c=z=-1},
\begin{align*}
\sum_{n=1}^{\infty} \frac{(-1)^{n-1} (q)_{n-1} q^{\frac{n(n+1)}{2}}}{(-q)_{n}^2} 
= \sum_{n=1}^{\infty} (-1)^{n-1} \frac{q(1-q)}{(1+q)^2} \frac{q^2(1-q^2)}{(1+q^2)^2} \cdots
\frac{q^{n-1}(1-q^{n-1})}{(1+q^{n-1})^2}\cdot\frac{q^n}{(1+ q^n)^2}.
\end{align*}
Now,
$\frac{q^r (1-q^r)}{(1+q^r)^2} = \sum_{k_r = 1}^{\infty} (-1)^{k_r - 1} (2k_r - 1) q^{k_r \cdot r}, \ (1 \leq r \leq n-1)$, 
$\frac{q^n}{(1+ q^n)^2} = \sum_{k_n = 1}^{\infty} (-1)^{k_n - 1} k_n q^{k_n \cdot n}.$
Multiplying all these terms together along with $(-1)^{n-1}$ we get, 
(here $n = \ell(\pi) = \nu_d(\pi)$, where $\pi \in \mathcal{P}^{\ast}(m)$)
\begin{align*}
 &\sum_{m=1}^{\infty} \left( \sum_{\pi \in \mathcal{P}^{\ast}(m)} (-1)^{\ell(\pi) - 1} (-1)^{\#(\pi) - \ell(\pi)} \nu(\ell(\pi))
 \prod_{i=1}^{\ell(\pi) - 1} (2\nu(i) - 1) \right) q^m \\
 &= \sum_{m=1}^{\infty} \left( \sum_{\pi \in \mathcal{P}^{\ast}(m)} (-1)^{\#(\pi) - 1}
\omega(\pi) \right) q^m,
\end{align*}
where $\omega(\pi)$ is as defined in \eqref{omega(pi)}.
By comparing coefficients of the above line with that of \eqref{rhsinter1}, we finally get, for $n \geq 1$,
\begin{equation*}
 \sum_{\pi \in \mathcal{P}^{\ast}(n)} (-1)^{\#(\pi) - 1} \omega(\pi) = d(n) - 4d_{2, 4}(n).
\end{equation*}
\end{proof}
\begin{proof}[Proposition \textup{\ref{WPI for overpartition}}][]
We let $c=z=1$ in \eqref{WPI, d=-1}, giving us the identity
\begin{equation}\label{d=-1, c=z=1}
\sum_{n=1}^{\infty} \frac{(-q)_{n-1} q^{\frac{n(n+1)}{2}}}{(q)_n^2} = \sum_{n=1}^{\infty} \frac{(-q)_{n-1} q^n}{(q)_n}.
\end{equation}
Firstly, see that the left side of \eqref{d=-1, c=z=1} can be written as
\begin{equation*}
\sum_{n=1}^{\infty} \frac{(1+q)q}{(1-q)^2} \cdots  \frac{(1+q^{n-1})q^{n-1}}{(1-q^{n-1})^2} \frac{q^n}{(1-q^n)^2}.
\end{equation*}
As before, note that $\frac{(1+q^r)q^r}{(1-q^r)^2} = \sum_{k_r = 1}^{\infty} (2k_r - 1) q^{k_r \cdot r}$ for $1 \leq r \leq n-1$.
Moreover, $\frac{q^n}{(1-q^n)^2} = \sum_{k_n = 1}^{\infty} = k_n q^{k_n \cdot n}$. Multiplying these series together and interpreting in terms of partitions,
the left side of \eqref{d=-1, c=z=1} thus becomes
\begin{equation}\label{combintLHS}
\sum_{n=1}^{\infty} \left( \sum_{\pi \in \mathcal{P}^{\ast}(n)} \nu(\ell(\pi)) \prod_{i=1}^{\ell(\pi)-1} (2\nu(i) -1) \right) q^n
= \sum_{n=1}^{\infty} \left( \sum_{\pi \in \mathcal{P}^{\ast}(n)} \omega(\pi) \right) q^n.
\end{equation}
Now, the right side of \eqref{d=-1, c=z=1} is
\begin{align*}
\sum_{n=1}^{\infty} \frac{(-q)_{n-1} q^n}{(q)_n} = \sum_{n=1}^{\infty} \frac{1+q}{1-q} \cdots \frac{1+q^{n-1}}{1-q^{n-1}}\frac{q^n}{1-q^n}. 
\end{align*}
Now, $\frac{1+q^r}{1-q^r} = 1 + \sum_{t=1}^{\infty} 2q^{tr} \quad (1 \leq r \leq n-1)$.
Multiplying together these series along with that corresponding to $n$, the interpretation
for the right side of \eqref{d=-1, c=z=1} is
\begin{equation}\label{combintRHS}
 \sum_{n=1}^{\infty} \left( \sum_{\pi \in \mathcal{P}(n)} 2^{\nu_d(\pi) - 1}\right) q^n.
\end{equation}
Comparing \eqref{combintLHS} and \eqref{combintRHS} gives us
\begin{equation*}
\sum_{\pi \in \mathcal{P}^{\ast}(n)} \omega(\pi) = \sum_{\pi \in \mathcal{P}(n)} 2^{\nu_d(\pi) - 1}.
\end{equation*}
This can also be written as
\begin{equation*}
2\sum_{\pi \in \mathcal{P}^{\ast}(n)} \omega(\pi) = \overline{p}(n),
\end{equation*}
by the well-known identity connecting the number of overpartitions $\overline{p}(n)$ and the weighted sum over partitions 
$\sum_{\pi \in \mathcal{P}(n)} 2^{\nu_d(\pi)}$, namely,  
$\sum_{\pi \in \mathcal{P}(n)} 2^{\nu_d(\pi)} = \overline{p}(n)$.
\end{proof}
\begin{proposition}
The following identity holds true for any $n \in \mathbb{N}$.
 \begin{equation*}
  \sum_{\pi \in \mathcal{P}(n)} (-1)^{\ell(\pi) - 1} 2^{\nu_d(\pi) - 1} =
  \sum_{\substack{\pi \in \mathcal{D}(n) \\ s(\pi) \ \text{odd}}} (-1)^{\#(\pi) - 1}. 
\end{equation*}
\end{proposition}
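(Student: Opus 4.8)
The plan is to specialize the identity \eqref{WPI, d=-1} at $z=-1$ and $c=1$, and then read off and conjugate the partition content of each side. With these values \eqref{WPI, d=-1} reads
\begin{equation}\label{prop_spec}
\sum_{n=1}^{\infty}\frac{(-1)^{n-1}(-q)_{n-1}\,q^{\frac{n(n+1)}{2}}}{(-q)_n\,(q)_n}=\sum_{n=1}^{\infty}\frac{(q)_{n-1}\,q^{n}}{(-q)_n},
\end{equation}
and I would extract generating-function interpretations of both sides over partitions of $m$, matching coefficients of $q^m$ at the end.

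For the left side of \eqref{prop_spec}, I would use $(-q)_n=(-q)_{n-1}(1+q^n)$ and $\dfrac{q^{n(n+1)/2}}{(q)_n}=\prod_{r=1}^{n}\dfrac{q^r}{1-q^r}$ to rewrite the $n$-th summand as $(-1)^{n-1}\Bigl(\prod_{r=1}^{n-1}\dfrac{q^r}{1-q^r}\Bigr)\dfrac{q^n}{1-q^{2n}}$. Since $\dfrac{q^r}{1-q^r}=\sum_{k\ge1}q^{rk}$ records a part $r$ occurring with an arbitrary positive multiplicity, while $\dfrac{q^n}{1-q^{2n}}=\sum_{k\ge0}q^{(2k+1)n}$ records the largest part $n$ occurring an odd number of times, the left side of \eqref{prop_spec} equals $\sum_{m\ge1}\Bigl(\sum_{\pi}(-1)^{\ell(\pi)-1}\Bigr)q^m$, where the inner sum runs over all $\pi\in\mathcal{P}^{*}(m)$ with $\nu(\ell(\pi))$ odd.

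For the right side, I would write $\dfrac{(q)_{n-1}q^n}{(-q)_n}=\Bigl(\prod_{r=1}^{n-1}\dfrac{1-q^r}{1+q^r}\Bigr)\dfrac{q^n}{1+q^n}$ and expand $\dfrac{1-q^r}{1+q^r}=1+2\sum_{j\ge1}(-1)^{j}q^{rj}$ (a part $r<n$ is absent, or occurs with multiplicity $j$ and weight $2(-1)^{j}$) and $\dfrac{q^n}{1+q^n}=\sum_{j\ge1}(-1)^{j-1}q^{jn}$ (the largest part $n$ occurs with multiplicity $j$ and weight $(-1)^{j-1}$). Collecting weights for a partition $\pi$ of $m$ with largest part $\ell(\pi)$, the total weight is $2^{\nu_d(\pi)-1}(-1)^{\nu(\ell(\pi))-1}\prod_{r<\ell(\pi)}(-1)^{\nu(r)}=2^{\nu_d(\pi)-1}(-1)^{\#(\pi)-1}$, because precisely $\nu_d(\pi)-1$ distinct parts lie strictly below $\ell(\pi)$ and $\sum_{r<\ell(\pi)}\nu(r)=\#(\pi)-\nu(\ell(\pi))$. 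Hence, comparing the coefficients of $q^n$ in \eqref{prop_spec},
\begin{equation}\label{prop_intermediate}
\sum_{\pi\in\mathcal{P}(n)}(-1)^{\#(\pi)-1}2^{\nu_d(\pi)-1}=\sum_{\substack{\pi\in\mathcal{P}^{*}(n)\\ \nu(\ell(\pi))\ \text{odd}}}(-1)^{\ell(\pi)-1}.
\end{equation}

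To finish, I would apply conjugation to both sides of \eqref{prop_intermediate}. Conjugation maps $\mathcal{P}^{*}(n)$ bijectively onto $\mathcal{D}(n)$, sending $\ell(\pi)$ to $\#(\pi)$ and the multiplicity $\nu(\ell(\pi))$ of the largest part to the smallest part $s(\pi)$, so the right side of \eqref{prop_intermediate} becomes $\sum_{\pi\in\mathcal{D}(n),\ s(\pi)\ \text{odd}}(-1)^{\#(\pi)-1}$. Conjugation is also a bijection of $\mathcal{P}(n)$ that interchanges $\ell$ and $\#$ and leaves $\nu_d$ unchanged --- if $\pi$ has distinct parts $a_1>\cdots>a_k$ with multiplicities $m_1,\dots,m_k$, then $\pi'$ has exactly the $k$ distinct parts $m_1,\,m_1+m_2,\,\dots,\,m_1+\cdots+m_k$ --- so the left side becomes $\sum_{\pi\in\mathcal{P}(n)}(-1)^{\ell(\pi)-1}2^{\nu_d(\pi)-1}$, which is the claimed identity. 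The main obstacle is not any single step but the cumulative sign- and statistic-bookkeeping: one must be careful that $\nu_d$ is conjugation-invariant, that ``$\nu(\ell(\pi))$ odd'' translates to ``$s(\pi)$ odd'' on distinct-part partitions, and that the product of the signs $(-1)^{\nu(r)}$ telescopes to $(-1)^{\#(\pi)-1}$.
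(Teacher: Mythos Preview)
Your proposal is correct and follows essentially the same route as the paper: specialize \eqref{WPI, d=-1} at $c=1$, $z=-1$, interpret the left side over $\mathcal{P}^{*}(n)$ with $\nu(\ell(\pi))$ odd and the right side over $\mathcal{P}(n)$ with weight $(-1)^{\#(\pi)-1}2^{\nu_d(\pi)-1}$, and then conjugate both sides. The only difference is that you spell out more carefully why $\nu_d$ is conjugation-invariant and why the signs on the right-hand expansion telescope to $(-1)^{\#(\pi)-1}$, details the paper leaves implicit.
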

\begin{proof}
Making the substitutions $c \rightarrow 1, \ z \rightarrow -1$ in \eqref{WPI, d=-1} leads us to
\begin{equation}\label{d=-1, c=1, z=-1}
\sum_{n=1}^{\infty} \frac{(-1)^{n-1} q^{\frac{n(n+1)}{2}}}{(1+q^n) (q)_n} = \sum_{n=1}^{\infty}
\frac{(q)_{n-1} q^n}{(-q)_n}.
\end{equation}
The left side takes the form
\begin{align*}
\sum_{n=1}^{\infty} \frac{(-1)^{n-1} q^{1 + 2 + \cdots + n}}{(1-q) (1-q^2) \cdots (1-q^{n-1})(1-q^{2n})}
= \sum_{n=1}^{\infty} \left( \sum_{\substack{\pi \in \mathcal{P}^{\ast}(n) \\
\nu(\ell(\pi)) \ \text{odd}}} (-1)^{\ell(\pi) - 1} \right) q^n.
\end{align*}
On the other hand, the right side of \eqref{d=-1, c=1, z=-1} is
\begin{align*}
\sum_{n=1}^{\infty} \frac{(q)_{n-1} q^n}{(-q)_n} = 
\sum_{n=1}^{\infty} \frac{(1-q)(1-q^2) \cdots (1-q^{n-1}) q^n}{(1+q)(1+q^2) \cdots (1+q^{n-1})(1+q^n)}
= \sum_{n=1}^{\infty} \left( \sum_{\pi \in \mathcal{P}(n)} (-1)^{\#(\pi) - 1} 
 2^{\nu_d(\pi) - 1} \right) q^n.
\end{align*}
By comparing coefficients, we get
\begin{equation*}
 \sum_{\pi \in \mathcal{P}(n)} (-1)^{\#(\pi) - 1} 
 2^{\nu_d(\pi) - 1} = \sum_{\substack{\pi \in \mathcal{P}^{\ast}(n) \\ \nu(\ell(\pi)) \ \text{odd} }}
(-1)^{\ell(\pi) - 1},
 \end{equation*}
which upon conjugation yields
\begin{equation*}
  \sum_{\pi \in \mathcal{P}(n)} (-1)^{\ell(\pi) - 1} 2^{\nu_d(\pi) - 1} =
  \sum_{\substack{\pi \in \mathcal{D}(n) \\ s(\pi) \ \text{odd}}} (-1)^{\#(\pi) - 1}. 
\end{equation*}
This is the same as Equation $(5.8)$ in \cite{DM}.
\end{proof}
\subsection{Unexplored identities by letting $c \rightarrow 0$ in Theorem $2.1$ of \cite{DM}}
While looking for partition-theoretic consequences that arise from our main result,
Equation \ref{CEM-main identity}, we stumbled upon an interesting one-parameter identity which
arises out of the special case $d=1$, namely from \cite[Theorem 2.1]{DM}. In the present subsection, we derive this result.
\begin{proposition}\label{crightarrow0DM}
Let $\mathcal{P}_1^{\ast}(n)$ denote the collection of partitions of $n$ in which each integer lying between the smallest and largest parts of a 
partition also occurs as a part. Then,
\begin{equation}\label{c=0DMfinalid}
 \sum_{\substack{\pi \in \mathcal{P}(n) \\ s(\pi) \geq 2}}
 t^{\ell(\pi) + \#(\pi) - 2}\left( 1-\frac{1}{t}\right)^{\nu_d(\pi) - 1} =
 \sum_{\substack{\pi \in \mathcal{P}(n) \\ \ell(\pi) = 2}} t^{\#(\pi)} + 
 \sum_{\substack{\pi \in \mathcal{P}_1^{\ast}(n) \\ s(\pi)= 2, \ \ell(\pi) \geq 3}} 
 (-1)^{\ell(\pi)} t^{\#(\pi)} \nu(\ell(\pi) - 1).
\end{equation} 
\end{proposition}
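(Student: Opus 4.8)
The plan is to derive \eqref{c=0DMfinalid} from Theorem~\ref{gen of Ramanujan's identity} (the $d=1$ case of Theorem~\ref{Generalization of Dixit-Maji}) by a one-parameter specialization followed by the limit $c\to 0$. Concretely, in \eqref{entry3gen} I would set $a=tq$ and $b=tq^{2}$, so that $(b/a)_{n}=(q;q)_{n}$ and the left side becomes $\sum_{n\ge 1}(q;q)_{n}(tq)^{n}/\{(1-cq^{n})(tq^{2};q)_{n}\}$. Letting $c\to 0$ deletes the factor $1/(1-cq^{n})$ on the left, and on the right it uses the elementary limit $(b/c)_{m}c^{m}=\prod_{j=0}^{m-1}(c-bq^{j})\to(-b)^{m}q^{m(m-1)/2}$ together with the easy simplification $aq^{m}/(1-aq^{m})-bq^{m}/(1-bq^{m})=tq^{m+1}(1-q)/\{(1-tq^{m+1})(1-tq^{m+2})\}$, which leaves a factor $(1-q)$ in every term. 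Multiplying both sides by $q/(1-q)$ and using $(q;q)_{n}=(1-q)(q^{2};q)_{n-1}$, I expect to land on the clean identity
\begin{equation*}
\sum_{n=1}^{\infty}\frac{t^{n}q^{n+1}(q^{2};q)_{n-1}}{(tq^{2};q)_{n}}
=\frac{tq^{2}}{(1-tq)(1-tq^{2})}+\sum_{m=1}^{\infty}\frac{(-t)^{m}\,t\,q^{(m+1)(m+4)/2}}{(tq^{2};q)_{m+1}\,(1-tq^{m+1})},
\end{equation*}
and then I would prove that each side, read off in powers of $q$, gives the corresponding side of \eqref{c=0DMfinalid}.

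For the left side, fix $n$ and write the $n$-th term as $t^{n-1}\cdot\frac{tq^{n+1}}{1-tq^{n+1}}\prod_{j=2}^{n}\frac{1-q^{j}}{1-tq^{j}}$. Expanding $tq^{n+1}/(1-tq^{n+1})=\sum_{k\ge 1}t^{k}q^{(n+1)k}$ and each factor via the collapse $1+(1-1/t)\frac{tq^{j}}{1-tq^{j}}=\frac{1-q^{j}}{1-tq^{j}}$, the $n$-th term generates precisely the partitions $\pi$ with $\ell(\pi)=n+1$ and $s(\pi)\ge 2$, with weight $t^{\ell(\pi)+\#(\pi)-2}(1-1/t)^{\nu_{d}(\pi)-1}$: the stray $t^{n-1}$ accounts for the exponent $\ell(\pi)-2$, and one factor $1-1/t$ appears for each distinct part other than the largest. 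For the right side, the $m=0$ summand $tq^{2}/\{(1-tq)(1-tq^{2})\}$ is visibly $\sum_{\ell(\pi)=2}t^{\#(\pi)}q^{|\pi|}$; and for $m\ge 1$, putting $\ell=m+2$ one checks that $q^{(m+1)(m+4)/2}=q^{2+3+\cdots+\ell}$ forces every integer from $2$ to $\ell$ to occur, so $\pi\in\mathcal{P}_{1}^{\ast}$ with $s(\pi)=2$ and $\ell(\pi)=\ell\ge 3$; the squared factor $(1-tq^{\ell-1})^{2}$ hidden in $(tq^{2};q)_{m+1}(1-tq^{m+1})$ delivers the weight $\nu(\ell-1)$, and $(-t)^{m}t=(-1)^{\ell}t^{\ell-1}$ combines with the $t$'s from the remaining geometric series to give exactly $(-1)^{\ell(\pi)}t^{\#(\pi)}$. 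Equating coefficients of $q^{n}$ then yields \eqref{c=0DMfinalid}.

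The hard part is the bookkeeping on the right-hand side. One has to (i) take the $c\to 0$ limit and simplify the difference of the two rational terms so that the extraneous $(1-q)$ cancels against the prefactor $q/(1-q)$; (ii) recognize the exponent $m(m-1)/2+3m+2$ as the sum $2+3+\cdots+(m+2)$, which is what pins the parts down to the ``consecutive'' class $\mathcal{P}_{1}^{\ast}$ with smallest part $2$; (iii) tease the multiplicity weight $\nu(\ell(\pi)-1)$ out of the double pole $(1-tq^{\ell-1})^{-2}$; and (iv) verify that all the stray powers of $t$ on each side assemble into exactly $(-1)^{\ell(\pi)}t^{\#(\pi)}$ and $t^{\ell(\pi)+\#(\pi)-2}(1-1/t)^{\nu_{d}(\pi)-1}$, with every partition counted once, indexed by its largest part. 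The only genuinely slick point is the identity $1+(1-1/t)\,tq^{j}/(1-tq^{j})=(1-q^{j})/(1-tq^{j})$ that collapses the left side into a weighted count over $\{\pi:s(\pi)\ge 2\}$; everything else is careful but routine $q$-series manipulation.
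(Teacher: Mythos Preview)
Your proposal is correct and follows essentially the same route as the paper's own proof: specialize $d=1$, $c\to 0$ in the main identity (equivalently, start from \eqref{entry3gen}), substitute $a=tq$, $b=tq^{2}$, multiply by $q/(1-q)$, and then give the combinatorial reading of each side exactly as you describe. The only cosmetic difference is that the paper peels off the $m=0$ and $m=1$ terms separately before recombining, whereas you keep the $m\ge 1$ sum intact from the outset; the algebraic identity you display and the partition interpretations (including the key collapse $(1-q^{j})/(1-tq^{j})=1+(1-1/t)\,tq^{j}/(1-tq^{j})$ and the double pole $(1-tq^{\ell-1})^{-2}$ yielding $\nu(\ell(\pi)-1)$) coincide with the paper's.
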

\begin{proof}
Firstly, we let $d \rightarrow 1, \ c \rightarrow 0$ in \eqref{CEM-main identity}. This gives us
\begin{equation}\label{c=0DM}
 \sum_{n=1}^{\infty} \frac{\left(\frac{b}{a}\right)_n a^n}{(b)_n} = 
 \sum_{m=0}^{\infty} \frac{(-1)^m q^{\frac{m(m-1)}{2}} b^m}{(b)_m} \left( \frac{aq^m}{1-aq^m}
 - \frac{bq^m}{1-bq^m}\right).
\end{equation}
We first consider the right side of \eqref{c=0DM}. Put $a=tq, b=tq^2$ to get
\begin{align*}
 \sum_{m=0}^{\infty} \frac{(-1)^m t^m q^{\frac{m(m+1)}{2}+m}}{(tq^2)_m}
 \left( \frac{tq^{m+1}}{1-tq^{m+1}}
 - \frac{tq^{m+2}}{1-tq^{m+2}}\right).
\end{align*}
Separating the terms corresponding to $m=0$ and $m=1$, we get
\begin{align*}
& \left( \frac{tq}{1-tq} - \frac{tq^2}{1-tq^2}\right) + \left( \frac{-tq^2}{1-tq^2}
\left\{ \frac{tq^2}{1-tq^2} - \frac{tq^3}{1-tq^3} \right\} \right) 
+ \sum_{m=2}^{\infty} \frac{(-1)^m t^m q^{\frac{m(m+1)}{2}+m} tq^{m+1} (1-q)}
{(tq^2)_m (1-tq^{m+1})(1-tq^{m+2})}\\
&= \frac{tq(1-q)}{(1-tq)(1-tq^2)} - \frac{(tq^2)^2 (1-q)}{(1-tq^2)^2 (1-tq^3)} + 
\frac{1-q}{q} \sum_{m=2}^{\infty} \frac{(-1)^m t^{m+1} q^{2(m+1)} q^{\frac{m(m+1)}{2}}}{(tq^2)_m (1-tq^{m+1})(1-tq^{m+2})}.
\end{align*}
Now, multiplying by $\frac{q}{1-q}$, we have
\begin{equation}\label{3terms}
\frac{tq^2}{(1-tq)(1-tq^2)} - \frac{(tq^2)(tq^3)}{(1-tq^2)^2(1-tq^3)} +
\sum_{m=2}^{\infty}  \frac{(-1)^m t^{m+1} q^{2(m+1)} q^{\frac{m(m+1)}{2}}}{(tq^2)_m (1-tq^{m+1})(1-tq^{m+2})}. 
\end{equation}
In \eqref{3terms}, the first and second terms can be interpreted respectively as 
\begin{equation}\label{1stand2ndterms}
 \sum_{n=1}^{\infty} \left( \sum_{\substack{\pi \in \mathcal{P}(n) \\ \ell(\pi) = 2}} t^{\#(\pi)}\right) q^n \quad
\text{and} \quad \sum_{n=1}^{\infty} \left( \sum_{\substack{\pi \in \mathcal{P}(n) \\ s(\pi) = 2,\ \ell(\pi) = 3}} t^{\#(\pi)} \nu(2)\right) q^n.  
\end{equation}
Let us now focus on the third term in \eqref{3terms}, namely,
\begin{equation}\label{3rdterm}
\sum_{m=2}^{\infty} \frac{(-1)^m t^{m+1} q^{2(m+1)} q^{\frac{m(m+1)}{2}}}{(tq^2)_m (1-tq^{m+1})(1-tq^{m+2})}
= \sum_{m=2}^{\infty} \frac{(-1)^m \left(tq^2\right)\left(tq^3\right) \cdots 
\left(tq^{m+2}\right)}{(1-tq^2) \cdots (1-tq^m)(1-tq^{m+1})^2(1-tq^{m+2})}.
\end{equation}
Now, for $2 \leq i \leq m+2, \ i \neq m+1$, we have 
$\frac{tq^i}{1-tq^i} = \sum_{k_i = 1}^{\infty} t^{k_i} q^{k_i \cdot i},\
\frac{tq^{m+1}}{(1-tq^{m+1})^2} =   \sum_{k_{m+1} = 1}^{\infty}
k_{m+1} t^{k_{m+1}} q^{k_{m+1} \cdot (m+1)}.$ 
Combining all these terms, \eqref{3rdterm} becomes
\begin{equation}\label{3rdtermint}
 \sum_{n=1}^{\infty} \left( \sum_{\substack{\pi \in 
 \mathcal{P}_1^{\ast}(n) \\ s(\pi) = 2, \ \ell(\pi) \geq 4}} 
 (-1)^{\ell(\pi)} t^{\#(\pi)} \nu(\ell(\pi)-1) \right) q^n.
\end{equation}
Thus, from \eqref{1stand2ndterms} and \eqref{3rdtermint},
the complete interpretation of \eqref{3terms} is as follows:
\begin{align}
&\sum_{\substack{\pi \in 
 \mathcal{P}(n) \\ \ell(\pi) = 2}} t^{\#(\pi)} +
\sum_{\substack{\pi \in 
 \mathcal{P}(n) \\ s(\pi) = 2, \ \ell(\pi) = 3}} 
 (-1)^{\ell(\pi)} t^{\#(\pi)} \nu(\ell(\pi)-1) +
 \sum_{\substack{\pi \in 
 \mathcal{P}_1^{\ast}(n) \\ s(\pi) = 2, \ \ell(\pi) \geq 4}} 
 (-1)^{\ell(\pi)} t^{\#(\pi)} \nu(\ell(\pi)-1) \nonumber \\
& =\sum_{\substack{\pi \in 
 \mathcal{P}(n) \\ \ell(\pi) = 2}} t^{\#(\pi)} +
 \sum_{\substack{\pi \in 
 \mathcal{P}_1^{\ast}(n) \\ s(\pi) = 2, \ \ell(\pi) \geq 3}} 
 (-1)^{\ell(\pi)} t^{\#(\pi)} \nu(\ell(\pi)-1). \label{PI_RHS}
 \end{align}
This is nothing but the partition-theoretic meaning extracted from the right
hand side of \eqref{c=0DM} after making the substitutions $a=tq, b=tq^2$ and then
multiplying throughout by $\frac{q}{1-q}$.
We now try to interpret the left hand side of \eqref{c=0DM} after subjecting it through
the same sequence of operations. This brings us to
\begin{align*}
\frac{q}{1-q} \sum_{n=1}^{\infty} \frac{(q)_n t^n q^n}{(tq^2)_n}
=\sum_{n=1}^{\infty} \frac{\{t(1-q^2)\} \cdots \{ t(1-q^n)\} (tq^{n+1})}{(1-tq^2)\cdots(1-tq^n)(1-tq^{n+1})}.
\end{align*}
Now, for $2 \leq r \leq n$, we have
$\frac{t(1-q^r)}{1-tq^r} = t + \sum_{k_r=1}^{\infty} t^{k_r}(t-1) q^{k_r \cdot r}
= t\left( 1 + \sum_{k_r = 1}^{\infty} t^{k_r} \left( 1-\frac{1}{t}\right)q^{k_r \cdot r}\right)$.
Again, $ \frac{tq^{n+1}}{1-tq^{n+1}} = \sum_{k_{n+1} = 0}^{\infty} t^{k_{n+1}}
q^{k_{n+1}\cdot (n+1)}$. 
Multiplying all these together we obtain
\begin{equation}\label{PI_LHS}
 \sum_{n=2}^{\infty} \left( \sum_{\substack{\pi \in \mathcal{P}(n) \\ s(\pi) \geq 2}}
 t^{\ell(\pi)-2} \cdot t^{\#(\pi)} \cdot \left( 1-\frac{1}{t}\right)^{\nu_d(\pi) - 1} \right) q^n. 
\end{equation}
Comparing \eqref{PI_LHS} and \eqref{PI_RHS}, we obtain \eqref{c=0DMfinalid}.
\end{proof}
We now consider a special case of Proposition \ref{crightarrow0DM}, namely, letting $t \rightarrow 1$ in \eqref{c=0DMfinalid}.
\begin{proof}[Theorem \textup{\ref{analogous to FFW}}][]
As we tend $t \rightarrow 1$ in the left side of \eqref{c=0DMfinalid}, only the terms corresponding to $\nu_d(\pi)=1$ will survive. So,
\begin{equation*}
\lim_{t \rightarrow 1} \sum_{\substack{\pi \in \mathcal{P}(n) \\ s(\pi) \geq 2}}t^{\ell(\pi) + \#(\pi) - 2}\left( 1-\frac{1}{t}\right)^{\nu_d(\pi) - 1} 
=\sum_{\substack{\pi \in \mathcal{P}(n) \\ s(\pi) \geq 2, \ \nu_d(\pi) = 1}} 1.
\end{equation*}
But a partition $\pi$ of $n$ with $\nu_d(\pi) = 1$ simply corresponds to a divisor of $n$. For example, if 
$n=6$, the partitions $\pi$ with $\nu_d(\pi)=1$ are $6, \ 3+3, \ 2+2+2, 1+1+1+1+1+1$, corresponding to the divisors
$6, 3, 2, 1$ of $6$. Since we have the additional condition that $s(\pi) \geq 2$, we have to omit the 
partition with all $1$'s, or equivalently, we count the number of divisors of $n$ which are greater than $1$, which is simply
$d(n) - 1$. Thus, the left side of \eqref{c=0DMfinalid} reduces to $d(n) - 1$ in the case when $t \rightarrow 1$.
We now tend $t \rightarrow 1$ in the right side of \eqref{c=0DMfinalid}, 
\begin{equation}\label{rhSint}
\sum_{\substack{\pi \in \mathcal{P}(n) \\ \ell(\pi) = 2}} 1 + 
\sum_{\substack{\pi \in \mathcal{P}_1^{\ast}(n) \\ s(\pi)= 2, \ \ell(\pi) \geq 3}} 
 (-1)^{\ell(\pi)} \nu(\ell(\pi) - 1).
\end{equation}
The first sum here is simply the number of partitions of $n$ with $\ell(\pi)=2$. So
these partitions contain all $2$'s  or a combination of $2$'s and $1$'s. For example,
consider $n=7$. The relevant partitions are $2+2+2+1, \ 2+2+1+1+1, \ 2+1+1+1+1+1$. Once the number
of $2$'s in the partition is known, the partition is completely determined 
(since the remaining parts, if any, are all $1$'s). By keeping track of 
the number of $2$'s in the partition we see that partitions with largest part $2$ are either
$n/2$ in number (if $n$ is even) or $(n-1)/2$ in number (if $n$ is odd). In either case, this reduces to
$\lfloor\frac{n}{2}\rfloor$.

Coming to the second sum in \eqref{rhSint}, we conjugate the set of partitions over which the sum runs,
\begin{align*}
 \sum_{\substack{\pi \in \mathcal{P}_1^{\ast}(n) \\ s(\pi)= 2, \ \ell(\pi) \geq 3}} 
 (-1)^{\ell(\pi)} \nu(\ell(\pi) - 1) = \sum_{\substack{\pi \in \mathcal{D}_1(n) \\ 
 \nu(\ell(\pi)) = 2, \ \#(\pi) \geq 3}} (-1)^{\#(\pi)} (s_2(\pi) - s(\pi)),
\end{align*}
where $\mathcal{D}_1(n)$ and $s_2(\pi)$ are as defined in Theorem \ref{analogous to FFW}.
Putting our observations together, the case $t \rightarrow 1$ of the identity \eqref{c=0DMfinalid} thus gives us
\begin{equation*}
d(n) = 1 + \left\lfloor\frac{n}{2}\right\rfloor - \sum_{\pi \in \mathcal{D}^{\ast}(n)} (-1)^{\#(\pi) - 1}
(s_2(\pi) - s(\pi)).
\end{equation*}
\end{proof}
\section{Concluding Remarks}
In this article, we have established a one-variable generalization of the main identity of Dixit and Maji \cite[Theorem 2.1]{DM}. From this, 
all five entries of Ramanujan now follow as corollaries. We have seen that from one of these entries, namely Entry $4$, we were able to derive a 
one-parameter generalization of the identity of Bressoud and Subbarao, namely, Theorem \ref{BS-general one var}, 
by using the operators $D$ and $I$. From the generalized identity, that is, Theorem \ref{Generalization of Dixit-Maji},
we found a few interesting partition-theoretic implications, including some in the spirit of 
Bressoud and Subbarao, connecting certain divisor functions with weighted partition functions.

It would be desirable to find bijective proofs of these identities, for example, say \eqref{c=0DM, t=1} and \eqref{BSanalogue_a=-1}, which is 
the identity analogous to that of Bressoud and Subbarao obtained by putting $a=-1$ in \eqref{One_Var_BS-general}. Readers can further investigate the operators $D$ and $I$, 
especially in connection to newer weighted partition identities of Bressoud-Subbarao type. 
It would also be fascinating to see if the one-parameter generalization of Bressoud-Subbarao's identity, namely, \eqref{One_Var_BS-general} 
holds for any complex number $m$, and not just for integers. Offcourse, in that case, we have to take recourse to some other technique.


\begin{thebibliography}{00}
 
 \bibitem{alladiantheini}
 K.~Alladi, \emph{Weighted partition identities and applications}, Analytic Number Theory Volume I, B.~C.~Berndt, H.~G.~Diamond and A.~J.~Hildebrand, eds., Birkh\"{a}user, Boston, 1996.
 
 \bibitem{alladitrans}
 K.~Alladi, \emph{Partition identities involving gaps and weights}, Trans. Amer. Math. Soc.~\textbf{349} No. 12 (1997), 5001--5019.
 
 \bibitem{alladiramer}
 K.~Alladi, \emph{Partition identities involving gaps and weights, \textup{II}}, Ramanujan J.~\textbf{2} (1998), 21--37.
 
 
 \bibitem{alladi2016}
 K.~Alladi, \emph{Partitions with non-repeating odd parts and combinatorial identities}, Ann. Comb.~\textbf{20} (2016), 1--20.
 
 
 \bibitem{gea1998}
  G.E.~Andrews, The theory of partitions, Addison-Wesley Pub. Co., NY, 300 pp. (1976). Reissued, Cambridge University Press, New York, 1998.
 
 
\bibitem{andpar}
G.~E.~Andrews, \emph{Ramanujan and partial fractions}, Contributions to the History of Indian Mathematics, Hindustan Book Agency, New Delhi, 2005. 
 
 
 \bibitem{andrews08} G.~E.~Andrews, \emph{The number of smallest parts in the partitions on $n$}, J. Reine Angew. Math. {\bf 624} (2008), 133--142.
 
 \bibitem{yesttoday} G.~E.~Andrews, \emph{Partitions: Yesterday and Today}, The New Zealand Mathematical Society, 1979, 56 pp.
  
\bibitem{abramlostI} G.~E.~Andrews, B.~C.~Berndt, \emph{Ramanujan's Lost Notebook, Part I}, Springer, New York, 2005.   
  
 
%
 
 
 
 \bibitem{agl13} G.~E.~Andrews, F.~G.~Garvan, J. Liang, \emph{Self-conjugate vector partitions and the parity of the \textup{spt}-function}, Acta Arith., \textbf{158} No. 3 (2013), 199--218.  
 
 
  \bibitem{berkovichuncujnt}
  A.~Berkovich and A.~K.~Uncu, \emph{Variation on a theme of Nathan Fine. New weighted partition identities}, J. Number Theory~\textbf{176} (2017), 226--248.
 
 %
 
  \bibitem{bcbramforthnote}
  B.~C.~Berndt, \emph{Ramanujan's Notebooks, Part IV}, Springer-Verlag, New York, 1991.
 
 
 
 \bibitem{bresub}
 D.~Bressoud and M.~Subbarao, \emph{On Uchimura's connection between partitions and the number of divisors}, Canad.~Math.~Bull.~\textbf{27} (1984), 143--145.
 
 \bibitem{overp}
S.~Corteel and J.~Lovejoy, \emph{Overpartitions}, Trans. Amer. Math. Soc.~\textbf{356} No. 4 (2004), 1623--1635.
 
\bibitem{dilcher} K.~Dilcher, \emph{Some $q$-series identities related to divisor functions}, Discrete Math. {\bf 145} (1995), 83--93.  
 
 

\bibitem{DM}
 A.~Dixit and B.~Maji, \emph{Partition implications of a three-parameter $q$-series identity}, Ramanujan J. 
 {\bf 52} (2020), 323--358.

\bibitem{DEMS} A.~Dixit, P.~Eyyunni, B.~Maji and G.~Sood, \emph{Untrodden pathways in the theory of the restricted partition function p(n,N)}, arXiv:1812.01424, 2018. 
 
 \bibitem{EMS} P.~Eyyunni, B.~Maji and G.~Sood, \emph{An Inequality between finite analogues of rank and crank moments}, accepted, Int. J. Number Theory
(Special issue in honor of Bruce C. Berndt), 2020 DOI: https://doi.org/10.1142/S1793042120400217.
 
 
 \bibitem{fine} N.~J.~Fine, \emph{Basic Hypergeometric Series and Applications}, Mathematical Surveys and Monographs, Amer. Math. Soc., 1989. 
 
 \bibitem{ffw95} R.~Fokkink, W.~Fokkink and Z.~B.~Wang, \emph{A relation between partitions and the number of divisors}, Amer. Math. Monthly {\bf 102} (1995), 345--347.
 
 
 
 \bibitem{garvan1} F.~G.~Garvan, \emph{Weighted partition identities and divisor sums}, Ch. 12 in Frontiers in Orthogonal Polynomials and $q$-Series, Contemporary Mathematics and Its Applications: Monographs, Expositions and Lecture Notes: Vol. 1, M.~Z.~Nashed and X.~Li, eds., World Scientific, 2018, pp. 239--249.
 
\bibitem{GasperRahman} G.~Gasper, M.~Rahman, Basic Hypergeometric series, Second Edition, (Encyclopedia of Mathematics and its applications), 2004. 
 
\bibitem{glaisher} J.~W.~L.~Glaisher, \emph{On the representations of a number as the 
sum of two, four, six, eight, ten, and twelve squares}, Quart. J. Math. {\bf 38} (1907), 1--62.

\bibitem{guozeng2015} V.~J.~W.~Guo and J.~Zeng, \emph{Basic and bibasic identities related to divisor functions}, 
J. Math. Anal. Appl. {\bf 431} (2015), 1197--1209.
 
\bibitem{Rgupta} R.~Gupta, \emph{On sum-of-tails identities}, arXiv:2002.00447, 2020. 
 
 \bibitem{hamme} L.~van Hamme, \emph{Problem $6407$}, Amer. Math. Monthly {\bf 89} No. 9 (1982), 703--704.
 
 
 
 
 \bibitem{kluyver} J.~C.~Kluyver, Vraagstuk XXXVII (Solution by S.C. van Veen), Wiskundige Opgaven (1919), 92--93.
 
 
 
 
 \bibitem{ramanujanoriginalnotebook2}
 S.~Ramanujan, \emph{Notebooks of Srinivasa Ramanujan, Vol. II}, Tata Institute of Fundamental Research, Bombay, 1957, 393 pp.
 
 \bibitem{lnb}
 S.~Ramanujan, \emph{The Lost Notebook and Other Unpublished  Papers}, Narosa, New Delhi, 1988.
 
 \bibitem{ramanujantifr}
  S.~Ramanujan, \emph{Notebooks of Srinivasa Ramanujan, Vol. II}, Tata Institute of Fundamental Research, Mumbai, 2012.

 \bibitem{sloane}
 N.~J.~A.~Sloane, http://oeis.org/A228441.
 
\bibitem{uchimura81} K.~Uchimura, \emph{An identity for the divisor generating function arising from sorting theory}, J. Combin.
 Theory Ser. A {\bf 31} (1981), 131--135. 
 
 
 
 
 \end{thebibliography}
\end{document}